\newtheorem{theorem}{Theorem}
\DeclareMathOperator*{\argmin}{arg\,min}
\newsavebox\myboxA
\newsavebox\myboxB
\newlength\mylenA
\newcommand*\xoverline[2][0.75]{%
    \sbox{\myboxA}{$\m@th#2$}%
    \setbox\myboxB\null
    \ht\myboxB=\ht\myboxA%
    \dp\myboxB=\dp\myboxA%
    \wd\myboxB=#1\wd\myboxA
    \sbox\myboxB{$\m@th\overline{\copy\myboxB}$}
    \setlength\mylenA{\the\wd\myboxA}
    \addtolength\mylenA{-\the\wd\myboxB}%
    \ifdim\wd\myboxB<\wd\myboxA%
       \rlap{\hskip 0.5\mylenA\usebox\myboxB}{\usebox\myboxA}%
    \else
        \hskip -0.5\mylenA\rlap{\usebox\myboxA}{\hskip 0.5\mylenA\usebox\myboxB}%
    \fi}
\journal{arXiv.org}
\newcommand{\TheTitle}{Dynamical low-rank approximation for Burgers' equation with uncertainty} 
\date{\today}
\DeclareMathOperator{\diag}{diag}
\newcommand{\RomanNumeralCaps}[1]
    {\MakeUppercase{\romannumeral #1}}
\begin{document}
\begin{frontmatter}

\title{\TheTitle}

\author[adressJonas]{Jonas Kusch}
\author[adressGianluca]{Gianluca Ceruti}
\author[adressLukas]{Lukas Einkemmer}
\author[adressMartin]{Martin Frank}

\address[adressJonas]{Karlsruhe Institute of Technology, Karlsruhe, Germany,
    jonas.kusch@kit.edu}
\address[adressGianluca]{Universit{\"a}t T{\"u}bingen, T{\"u}bingen, Germany, ceruti@na.uni-tuebingen.de}
    \address[adressLukas]{University of Innsbruck, Innsbruck, Austria, lukas.einkemmer@uibk.ac.at}
\address[adressMartin]{Karlsruhe Institute of Technology, Karlsruhe, Germany, martin.frank@kit.edu}

\begin{abstract}
Quantifying uncertainties in hyperbolic equations is a source of several challenges. First, the solution forms shocks leading to oscillatory behaviour in the numerical approximation of the solution. Second, the number of unknowns required for an effective discretization of the solution grows exponentially with the dimension of the uncertainties, yielding high computational costs and large memory requirements. An
efficient representation of the solution via adequate basis functions permits to tackle these difficulties. The generalized polynomial chaos (gPC) polynomials allow such an efficient representation when the distribution of the uncertainties is known. These distributions are usually only available for input uncertainties such as initial conditions, therefore the efficiency of this ansatz can get lost during runtime.
In this paper, we make use of the dynamical low-rank approximation (DLRA) to obtain a memory-wise efficient solution approximation on a lower dimensional manifold. We investigate the use of the matrix projector-splitting integrator and the unconventional integrator for dynamical low-rank approximation, deriving separate time evolution equations
for the spatial and uncertain basis functions, respectively. This guarantees an efficient approximation of the solution even if the underlying probability distributions change over time. Furthermore, filters to mitigate the appearance of spurious oscillations are implemented, and a strategy to enforce boundary conditions is introduced. The proposed methodology is analyzed for Burgers’ equation equipped with uncertain initial
values represented by a two-dimensional random vector. The numerical experiments validate that the results
of a standard filtered Stochastic-Galerkin (SG) method are consistent with the numerical results obtained via
the use of numerical integrators for dynamical low-rank approximation. Significant reduction of the memory
requirements is obtained, and the important characteristics of the original system are well captured.
\end{abstract}

\begin{keyword}
uncertainty quantification, conservation laws, hyperbolic, intrusive UQ methods, dynamical low-rank approximation, matrix projector-splitting integrator, unconventional integrator
\end{keyword}

\end{frontmatter}

\section{Introduction}
A vast amount of engineering applications such as hydrology, gas dynamics, or radiative transport are governed by hyperbolic conservation laws. In many applications of interest the inputs (e.g. initial-, boundary conditions, or modeling parameters) of these equations are uncertain.  These uncertainties arise from modeling assumptions as well as measurement or discretization errors, and they can heavily affect the behavior
of inspected systems. Therefore, one core ingredient to obtain reliable knowledge of a given application is
to derive methods which include the effects of uncertainties in the numerical simulations.

Methods to quantify effects of uncertainties can be divided into intrusive and non-intrusive methods. Non-intrusive methods run a given deterministic solver for different realizations of the input in a black-box manner, see e.g. \cite{xiu2005high,babuvska2007stochastic,loeven2008probabilistic} for the stochastic-Collocation method and \cite{heinrich2001multilevel,mishra2012multi,mishra2012sparse,mishra2016numerical} for (multi-level) Monte Carlo methods. Intrusive methods perform a modal discretization of the solution and derive time evolution equations for the corresponding expansion coefficients. The perhaps most prominent intrusive approach is the stochastic-Galerkin (SG) method \cite{ghanem2003stochastic}, which represents the random dimension with the help of polynomials. These polynomials, which are picked according to a chosen probability distribution, are called generalized polynomial chaos (gPC) functions \cite{wiener1938homogeneous,xiu2002wiener}. By performing a Galerkin projection, a set of deterministic evolution equations for the expansion coefficients called the moment system can be derived. 

Quantifying uncertainties in hyperbolic problems comes with a large number of challenges such as spurious oscillations \cite{le2004uncertainty,barth2013non,dwight2013adaptive} or the loss of hyperbolicity \cite{poette2009uncertainty}. A detailed discussion and numerical comparison of these challenges when using intrusive and non-intrusive methods can be found in \cite{kusch2020intrusive}. Intrusive methods which preserve hyperbolicity are the intrusive polynomial moment (IPM) method \cite{poette2009uncertainty} which performs a gPC expansion of the entropy variables, and the Roe transformation method \cite{pettersson2014stochastic,gerster2020entropies} which performs a gPC expansion of the Roe variables. Furthermore, admissibility of the solution can be achieved by using bound-preserving limiters \cite{schlachter2018hyperbolicity} to push the solution into an admissible set. Oscillations that frequently arise in hyperbolic problems can be mitigated by either filters \cite{kusch2020filtered} or the use of multi-elements \cite{wan2006multi,durrwachter2020hyperbolicity,kusch2020oscillation}. 

One key challenge of uncertainty quantification is the exponential growth of the number of unknowns when the dimension of the random domain increases. Hyperbolic conservation laws, which tend to form shocks amplify this effect, since they require a fine discretization in each dimension. This does not only yield higher computational costs, but also extends memory requirements. Therefore, a crucial task is to find an efficient representation which yields a small error when using a small number of expansion coefficients. The gPC expansion provides such an efficient representation if the chosen polynomials belong to the probability density of the solution \cite{xiu2002wiener}. However, the probability density is commonly known only for the initial condition and the time evolution of this density is not captured by the chosen gPC polynomials. Thus, choosing gPC polynomials according to the initial distribution can become inaccurate when the distribution changes over time. 

We aim to resolve these issues by applying the dynamical low-rank approximation (DLRA) \cite{koch2007dynamical} to our problem. To decrease memory and computational requirements DLRA represents and propagates the solution in time on a prescribed low-rank manifold. Such a low-rank representation is expected to be efficient, since choosing the gPC expansion according to the underlying probability density provides an accurate solution representation for a small number of expansion coefficients \cite{xiu2002wiener}. Solving the DLRA equation by a matrix projector-splitting integrator \cite{lubich2014projector} yields time evolution equations for updating spatial and uncertain basis functions in time. Hence, the resulting method is able to automatically adapt basis function to resolve important characteristics of the inspected problem. Furthermore, the matrix projector-splitting integrator has improved stability properties and error bounds \cite{kieri2016discretized}. An extension of the matrix projector-splitting integrator to function spaces is presented in \cite{einkemmer2018low}.
First applications of dynamical low-rank approximations to uncertainty quantification are \cite{sapsis2009dynamically,sapsis2012dynamical,ueckermann2013numerical} which use a so-called \textit{dynamical double orthogonal} (DDO) approximation. The method however depends on the regularity of the coefficient matrix, which can potentially restrict the time step. Applications of the DLRA method in combination with the matrix projector-splitting integrator for parabolic equations with uncertainty can for example be found in \cite{musharbash2015error,musharbash2018dual,kazashi2020stability,kazashi2020existence}. A dynamical low-rank approximation for random wave equations has been studied in \cite{musharbash2017symplectic}. Examples of DLRA for kinetic equations, which include hyperbolic advection terms are \cite{einkemmer2018low,einkemmer2019quasi,einkemmer2020low,einkemmer2021asymptotic,peng2019low,peng2020high}. Similar to our work, filters have been used in \cite{peng2019low} to mitigate oscillations in the low-rank approximation. Furthermore, \cite{einkemmer2020low} uses diffusion terms to dampen oscillatory artifacts.

In this work, we focus on efficiently applying the matrix projector-splitting integrator and the unconventional integrator for DLRA to hyperbolic problems with quadratic physical fluxes. Furthermore, we study the effects of oscillatory solution artifacts, which we aim to mitigate through filters. Furthermore, we investigate a strategy to preserve boundary conditions, similar to \cite{musharbash2018dual}. Additionally, we investigate the unconventional DLRA integrator \cite{ceruti2020unconventional} in the context of uncertainty quantification and compare it to the standard matrix projector-splitting integrator \cite{lubich2014projector}. The different integrators for dynamical low-rank are compared to the classical stochastic-Galerkin method for Burgers' equation with two-dimensional uncertainties. In our numerical experiments, the chosen methods for DLRA capture the highly-resolved stochastic-Galerkin results nicely. It is observed that the unconventional integrator smears out the solution, which improves the expected value of the approximation but yields heavy dampening for the variance.

Following the introduction, we briefly present the required background for this paper in Section~\ref{sec:background}. Here, we give an overview of intrusive UQ methods as well as the DLRA framework applied to uncertainty quantification. Section~\ref{sec:ProjectorSplitting} discusses the matrix projector-splitting integrator applied to a scalar, hyperbolic equation with uncertainty. Section~\ref{sec:BC} proposes a strategy to enforce Dirichlet boundary conditions in the low-rank numerical approximation and Section~\ref{sec:discretize} discusses the numerical discretization. In Section~\ref{sec:numResults} we demonstrate the effectiveness of the dynamical low-rank approximation ansatz for hyperbolic problems by investigating Burgers' equation with a two-dimensional uncertainty.

\section{Background}\label{sec:background}
We compute a low-rank solution of a scalar hyperbolic equation with uncertainty
\begin{subequations}\label{eq:hyperbolicProblem}
\begin{align}
\partial_t &u(t,x,\xi) + \partial_x f(u(t,x,\xi)) = 0, \\ 
&u(t=0,x,\xi) = u_{\text{IC}}(x,\xi),\label{eq:ic} \\
u(t,x_L,\xi) &= u_L(t,\xi) \enskip \text{ and } \enskip u(t,x_R,\xi) = u_R(t,\xi).
\end{align}
\end{subequations}
The solution $u$ depends on time $t\in\mathbb{R}_+$, space $x\in [x_L,x_R]\subset\mathbb{R}$ and a scalar random variable $\xi\in\Theta\subset\mathbb{R}$. The random variable $\xi$ is equipped with a known probability density function $f_{\Xi}:\Theta\rightarrow\mathbb{R}_+$. 
\subsection{Intrusive methods for uncertainty quantification}\label{sec:backgroundSG}
The core idea of most intrusive methods is to represent the solution to \eqref{eq:hyperbolicProblem} by a truncated gPC expansion
\begin{align}\label{eq:truncatedGPC}
u(t,x,\xi)\approx u_N(t,x,\xi):=\sum_{i=0}^N \widehat u_i(t,x)\varphi_i(\xi) = \bm{\widehat{u}}(t,x)^T\bm{\varphi}(\xi).
\end{align}
Here, the basis functions $\bm{\varphi}=(\varphi_0,\cdots,\varphi_N)^T$ are chosen to be orthonormal with respect to the probability density function $f_{\Xi}$, i.e.,
\begin{align*}
\mathbb{E}[\varphi_i\varphi_j] = \int_{\Theta}\varphi_i(\xi)\varphi_j(\xi)f_{\Xi}\,d\xi = \delta_{ij}.
\end{align*}
Note that the chosen polynomial ansatz yields an efficient evaluation of quantities of interest such as expected value and variance
\begin{align*}
\mathbb{E}[u_N(t,x,\cdot)] = \widehat u_0(t,x), \qquad \text{Var}[u_N(t,x,\cdot)] = \sum_{i=1}^N \widehat u_i(t,x)^2.
\end{align*}
This ansatz is used to represent the solution in \eqref{eq:hyperbolicProblem} which yields
\begin{align}\label{eq:hyperbolicTruncated}
\partial_t u_N(t,x,\xi) + \partial_x f(u_N(t,x,\xi)) = R(t,x,\xi).
\end{align}
Then, a Galerkin projection is performed. We project the resulting residual $R$ to zero by multiplying \eqref{eq:hyperbolicTruncated} with test functions $\varphi_i$ (for $i=0,\cdots,N$) and taking the expected value. Prescribing the residual to be orthogonal to the test functions gives
\begin{align*}
\partial_t \widehat u_i(t,x) + \partial_x \mathbb{E}\left[f(u_N(t,x,\cdot))\varphi_i\right] = 0\qquad\text{ for }i = 0,\cdots,N.
\end{align*}
This closed deterministic system is called the stochastic-Galerkin (SG) moment system. It can be solved with standard finite volume or discontinuous Galerkin methods, provided it is hyperbolic. The resulting moments can then be used to evaluate the expected solution and its variance. However, hyperbolicity is not guaranteed for non-scalar problems, which is why a generalization of SG has been proposed in \cite{poette2009uncertainty}. This generalization, which is called the intrusive polynomial moment (IPM) method performs the gPC expansion on the so-called entropy variable $v = s'(u)$, where $s:\mathbb{R}\rightarrow\mathbb{R}$ is a convex entropy to \eqref{eq:hyperbolicProblem}. For more details on the IPM method, we refer to the original IPM paper \cite{poette2009uncertainty} as well as \cite[Chapter~4.2.3]{poette2019contribution} and \cite[Chapter~1.4.5]{10.5445/IR/1000121168}.
Furthermore, the solutions of various UQ methods, including stochastic-Galerkin, show spurious solution artifacts such as non-physical step wise approximations \cite{le2004uncertainty,barth2013non,dwight2013adaptive}. One strategty to mitigate these effects are filters, which have been proposed in \cite{kusch2020filtered}. The idea of filters is to dampen high order moments in between time steps to ensure a smooth approximation in the uncertain domain $\Theta$. 

When denoting the moments evaluated in a spatial cell $j$ at time $t_n$ as $\bm{\widehat u}_j^n\in\mathbb{R}^{N+1}$, a finite volume update with numerical flux $\bm{F^*}:\mathbb{R}^{N+1}\times\mathbb{R}^{N+1}\rightarrow\mathbb{R}^{N+1}$ takes the form
\begin{align}\label{eq:FVEquations}
 \bm{\widehat{u}}_j^{n+1} = \bm{\widehat{u}}_j^{n} -
 \frac{\Delta t}{\Delta x}(\bm{F^*}(\bm{\widehat{u}}_{j}^n,
 \bm{\widehat{u}}_{j+1}^n) - \bm{F^*}(\bm{\widehat{u}}_{j-1}^n,
 \bm{\widehat{u}}_{j}^n)).
\end{align}
To reduce computational costs, one commonly chooses a kinetic flux 
\begin{align*}
  \bm{F^*}(\bm{\widehat{u}},\bm{\widehat{v}}) = \mathbb{E}\left[ f^*(\bm{\widehat{u}}^T\bm{\varphi},\bm{\widehat{v}}^T\bm{\varphi})\bm{\varphi} \right]\approx\sum_{k=1}^{N_q}w_k f^*(\bm{\widehat{u}}^T\bm{\varphi}(\xi_k),\bm{\widehat{v}}^T\bm{\varphi}(\xi_k))\bm{\varphi}(\xi_k),
\end{align*}
where $f^*$ is a numerical flux for the original problem \eqref{eq:hyperbolicProblem} and we make use of a quadrature rule with $N_q$ points $\xi_k$ and weights $w_k$. An efficient computation is achieved by precomputing and storing the $N\cdot N_q$ terms $\varphi_i(\xi_k)$. For a more detailed derivation of the finite volume scheme, see e.g. \cite{kusch2020filtered}. Now, to mitigate spurious oscillations in the uncertain domain, we derive a filtering matrix $\bm{\mathcal{F}}\in\mathbb{R}^{N+1\times N+1}$. Note that to approximate a given function $u_{ex}:\Theta\rightarrow\mathbb{R}$, the common gPC expansion minimizes the L$^2$-distance between the polynomial approximation and $u_{ex}$. Unfortunately, this approximation tends to oscillate. A representation
\begin{align*}
    p(\xi) = \sum_{i=0}^N \widehat{\alpha}_i \varphi_i(\xi) = \bm{\widehat{\alpha}}^T\bm{\varphi}(\xi)
\end{align*}
which dampens oscillations in the polynomial approximation can be derived by solving the optimization problem
\begin{align}\label{eq:filterOptProblem}
   \bm{\widehat{\alpha}} = \argmin_{\bm{\alpha}\in\mathbb{R}^{N+1}}\left\{\mathbb{E}\left[ \left(\bm{\alpha}^T\bm{\varphi} - u_{ex}\right)^2 \right] + \lambda \mathbb{E}\left[\left( \mathcal{L}\bm{\alpha}^T\bm{\varphi}\right)^2\right]\right\}.
\end{align}
Here, $\lambda\in\mathbb{R}_+$ is a user-determined filter strength and $\mathcal{L}$ is an operator which returns high values when the approximation oscillates. Note that we added a term which punishes oscillations in the L$^2$-distance minimization that is used in gPC expansions. For uniform distributions, a common choice of $\mathcal{L}$ is
\begin{align*}
    \mathcal{L} := \frac{d}{d\xi}\left(1-\xi^2\right)\frac{d}{d\xi}.
\end{align*}
In this case, the gPC polynomials are eigenfunctions of $\mathcal{L}$.
In this case, the optimal expansion coefficients $\bm{\widehat \alpha}$ are given by $\widehat{\alpha}_i= \frac{1}{1+\lambda i^2 (i-1)^2}\mathbb{E}[u_{ex}\varphi_i]$, i.e., high order moments of the function $u_{ex}$ will be dampened. Collecting the dampening factors in a matrix $\bm{\mathcal{F}}(\lambda) = \diag\{(1 + \lambda i^2(i + 1)^2)^{-1}\}_{i = 0}^{N}$ and applying this dampening steps in between finite volume updates yields the filtered SG method
\begin{subequations}\label{eq:filteredEquations}
\begin{align}
 \bm{\overline{u}}_j^n &= \bm{\mathcal{F}}(\lambda) \bm{\widehat u}_j^n, \\
 \bm{\widehat u}_j^{n+1} &= \bm{\overline{u}}_j^{n} -
 \frac{\Delta t}{\Delta x}(\bm{F^*}(\bm{\overline{u}}_{j}^n,
 \bm{\overline{u}}_{j+1}^n) - \bm{F^*}(\bm{\overline{u}}_{j-1}^n,
 \bm{\overline{u}}_{j}^n)).
\end{align}
\end{subequations}
A filter for high dimensional uncertainties can be applied by successively adding a punishing term to the optimization problem \eqref{eq:filterOptProblem} for every random dimension. When the random domain is two-dimensional, i.e., $\Theta\subset\mathbb{R}^2$, then with
\begin{align*}
    \mathcal{L}_1 := \frac{\partial}{\partial\xi_1}\left(1-\xi_1^2\right)\frac{\partial}{\partial\xi_1}, \qquad \mathcal{L}_2 := \frac{\partial}{\partial\xi_2}\left(1-\xi_2^2\right)\frac{\partial}{\partial\xi_2},
\end{align*}
one can use
\begin{align*}
    \bm{\widehat{\alpha}} = \argmin_{\bm{\alpha}\in\mathbb{R}^{N+1}}\left\{\mathbb{E}\left[ \left(\bm{\alpha}^T\bm{\varphi} - u_{ex}\right)^2 \right] + \lambda \mathbb{E}\left[\left( \mathcal{L}_1\bm{\alpha}^T\bm{\varphi}\right)^2\right]+ \lambda \mathbb{E}\left[\left( \mathcal{L}_2\bm{\alpha}^T\bm{\varphi}\right)^2\right]\right\}.
\end{align*}
Here, the gPC functions are tensorized and collected in a vector $\varphi_{i(N+1)+j}(\xi_1,\xi_2) = \varphi_i(\xi_1)\varphi_j(\xi_2)$ where $i,j=0,\cdots,N$. The resulting filtered expansion coefficients are then given by 
\begin{align*}
    \widehat{\alpha}_{i(N+1)+j}= \frac{1}{1+\lambda i^2 (i-1)^2+\lambda j^2 (j-1)^2}\mathbb{E}[u_{ex}\varphi].
\end{align*}
The costs of the filtered SG as well as the classical SG method when using a kinetic flux function are $C_{SG}\lesssim N_t\cdot N_x\cdot N\cdot N_q$ and the memory requirement is $M_{SG} \lesssim \max\{ N_x\cdot N, N\cdot N_q \}$. For more general problems with uncertain dimension $p$, tensorizing the chosen basis functions and quadrature yields $C_{SG}\lesssim N_t\cdot N_x\cdot N^p\cdot N_q^p$ and the memory requirement is $M_{SG} \lesssim \max\{ N_x\cdot N^p, N\cdot N_q \}$. For a discussion of the advantages when using a kinetic flux, see \cite[Appendix~A]{kusch2020intrusive}.

\subsection{Matrix projector-splitting integrator for dynamical low-rank approximation}
\label{sec:splittingIntegrator}
The core idea of the dynamical low-rank approach is to project the original problem on a prescribed manifold of rank $r$ functions. Such an approximation is given by
\begin{align}\label{eq:rankrsol}
u(t,x,\xi)\approx\sum_{i,\ell=1}^r X_{\ell}(t,x) S_{\ell i}(t) W_i(t,\xi).
\end{align}
In the following, we denote the set of functions, which have a representation of the form \eqref{eq:rankrsol} by $\mathcal{M}_r$. Then, instead of computing the best approximation in $\mathcal{M}_r$ the aim is to find a solution $u_r(t,x,\xi)\in\mathcal{M}_r$ fulfilling
\begin{align}\label{eq:DLRproblem}
\partial_t u_r(t,x,\xi)\in T_{ u_r(t,x,\xi)}\mathcal{M}_r \qquad \text{such that} \qquad \Vert \partial_t u_r(t,\cdot,\cdot)+\partial_x f(u(t,\cdot,\cdot))\Vert = \text{min}.
\end{align}
Here, $T_{u_r(t,x,\xi)}\mathcal{M}_r$ denotes the tangent space of $\mathcal{M}_r$ at $u_r(t,x,\xi)$. According to \cite{einkemmer2018low}, the orthogonal projection onto the tangent space reads
\begin{align*}
&Pg = P_X g - P_X P_W g + P_{W}g, \\
\text{where } &P_X g = \sum_{i=1}^r X_i \langle X_i, g\rangle, \quad P_{W}g = \sum_{j=1}^r W_j\mathbb{E}[ W_j g ].
\end{align*}
This lets us rewrite \eqref{eq:DLRproblem} as
\begin{align}\label{eq:lowRankProjector}
\partial_t u_r(t,x,\xi) = -P(u_r(t,x,\xi))\partial_x f(u(t,x,\xi)),
\end{align}
where $\langle\cdot\rangle$ denotes the integration over the spatial domain. A detailed derivation can be found in \cite[Lemma~4.1]{koch2007dynamical}. Then, a Lie-Trotter splitting technique yields
\begin{subequations}\label{eq:projectorSplitEq}
\begin{align}
\partial_t u_{\RomanNumeralCaps{1}} &= P_W\left( -\partial_x f(u_{\RomanNumeralCaps{1}}) \right), \label{eq:DLR1}\\ 
\partial_t u_{\RomanNumeralCaps{2}} &= P_W P_X \left( \partial_x f(u_{\RomanNumeralCaps{2}}) \right), \label{eq:DLR2}\\ 
\partial_t u_{\RomanNumeralCaps{3}} &= P_X \left( -\partial_x f(u_{\RomanNumeralCaps{3}})\right) \label{eq:DLR3}.
\end{align}
\end{subequations}
In these split equations, each solution has a decomposition of the form \eqref{eq:rankrsol}. The solution of these split equations can be further simplified: Let us write $u_{\RomanNumeralCaps{1}}(t,x,\xi) = \sum_{j=1}^{r}K_j(t,x)W_{j}(t,\xi)$, i.e., we define $K_j(t,x):=\sum_{i=1}^r X_i(t,x)S_{ij}(t)$. Then, we test \eqref{eq:DLR1} against $W_{n}$ and omit the index $\RomanNumeralCaps{1}$ in the decomposition to simplify notation. This gives
\begin{align}\label{eq:KstepGeneral}
\partial_t K_n(t,x) = -\mathbb{E}\left[\partial_{x}f\left(u_{\RomanNumeralCaps{1}}(t,x,\cdot)\right)W_n(t,\cdot)\right].
\end{align}
This system is similar to the SG moment system, but with time dependent basis functions, cf. \cite{tryoen2010intrusive}. Performing a Gram-Schmidt decomposition in $L^2$ of $K_n$ yields time updated $X_i$ and $S_{ij}$. Then testing \eqref{eq:DLR2} with $W_m$ and $X_k$ gives
\begin{align}\label{eq:SstepGeneral}
\dot{S}_{mk}(t) = \mathbb{E}\left[\left\langle \partial_x f(u_{\RomanNumeralCaps{2}}(t,\cdot,\cdot))X_k(t,\cdot)\right\rangle W_m(t,\cdot)\right].
\end{align}
This equation is used to update $S_{mk}$. Lastly, we write $u_{\RomanNumeralCaps{3}}(t,x,\xi) = \sum_{i=1}^{r}X_i(t,x)L_i(t,\xi)$ and test \eqref{eq:DLR3} with $X_{n}$. This then yields
\begin{align}\label{eq:LstepGeneral}
\partial_t L_n(t,\xi) = \left\langle -\partial_x f(u_{\RomanNumeralCaps{3}}(t,\cdot,\xi))X_n(t,\cdot)\right\rangle.
\end{align}
Again, a Gram-Schmidt decomposition is used on $L_n$ to determine the time updated quantities $W_i$ and $S_{ij}$. Note that the $K$-step does not modify the basis $W$ and the $L$-step does not modify the basis $X$. Furthermore, the $S$-step solely alters the coefficient matrix $S$. Therefore, the derived equations can be interpreted as time update equations for the spatial basis functions $X_{\ell}$, the uncertain basis functions $W_{i}$ and the expansion coefficients $S_{\ell i}$. Hence, the matrix projector-splitting integrator evolves the basis functions in time, such that a low-rank solution representation is maintained. Let us use Einstein's sum notation to obtain a compact presentation of the integrator. The projector splitting procedure which updates the basis functions $X^{0} := X(t_0,x)$, $W^{0} := W(t_0,\xi)$ and coefficients $S^0 = S(t_0)$ from time $t_0$ to $t_1 = t_0+\Delta t$ then takes the following form:
\begin{enumerate}
    \item \textbf{$K$-step}: Update $X^{0}$ to $X^{1}$ and $S^0$ to $\widehat S^1$ via
\begin{align*}
\partial_t K_n(t,x) &= -\mathbb{E}\left[\partial_{x}f\left(K_{\ell} W^0_{\ell}\right)W_n^0\right]\\
K_n(t_0,x) &= X_{i}^{0}S_{in}^0.
\end{align*}
Determine $X^1$ and $\widehat S^1$ with $K(t_1,x) = X^1 \widehat S^1$.
\item \textbf{$S$-step}: Update $\widehat S^1 \rightarrow \widetilde S^0$ via
\begin{align*}
\dot{S}_{mk}(t) &= \mathbb{E}\left[\left\langle \partial_x f(X_{\ell}^1\widehat S_{\ell i}^1 W^0_i))X_k^1(t,\cdot)\right\rangle W_m^0\right]\\
S_{mk}(t_0) &= \widehat S^1_{mk}
\end{align*}
and set $\widetilde S^0 = S(t_0+\Delta t)$.
\item \textbf{$L$-step}: Update $W^0 \rightarrow W^1$ and $\widetilde S^0\rightarrow S^1$ via
\begin{align*}
\partial_t L_n(t,\xi) &= \left\langle -\partial_x f(X_{\ell i}^1 L_i) X_n^1\right\rangle\\
L_n(t_0,\xi) &= \widetilde S_{ni}^0 W_i^0.
\end{align*}
Determine $W^1$ and $S^1$ with $L^1 = S^1 W^1$.
\end{enumerate}

\subsection{Unconventional integrator for dynamical low-rank approximation}
Note that the $S$-step \eqref{eq:SstepGeneral} evolves the matrix $S$ backward in time, which is a source of instability for non-reversible problems such as diffusion equations or particle transport with high scattering. Furthermore, the presented equation must be solved successively, which removes the possibility of solving different steps in parallel. In \cite{ceruti2020unconventional}, a new integrator which enables parallel treatment of the $K$ and $L$-step while only evolving the solution forward in time has been proposed. This integrator, which is called the unconventional integrator, is similar but not equal to the matrix projector-splitting integrator and works as follows:
\begin{enumerate}
    \item \textbf{$K$-step}: Update $X^{0}$ to $X^{1}$ via
\begin{align*}
\partial_t K_n(t,x) &= -\mathbb{E}\left[\partial_{x}f\left(K_{\ell} W^0_{\ell}\right)W_n^0\right]\\
K_n(t_0,x) &= X_{i}^{0}S_{in}^0.
\end{align*}
Determine $X^1$ with a QR-decomposition $K(t_1,x) = X^1 R$ and store $M = \left(\langle X^1_i X^0_j\rangle\right)_{i,j = 1}^r$.
\item \textbf{$L$-step}: Update $W^0$ to $W^1$ via
\begin{align*}
\partial_t L_n(t,\xi) &= \left\langle -\partial_x f(X_{i}^0 L_i) X_n^1\right\rangle\\
L_n(t_0,\xi) &= S_{ni}^0 W_i^0.
\end{align*}
Determine $W^1$ with a QR-decomposition $L^1 = W^1\widetilde R$ and store $N = \left(\mathbb{E}[W_i^1 W_j^0]\right)_{i,j = 1}^r$.
\item \textbf{$S$-step}: Update $S^0$ to $S^1$ via
\begin{align*}
\dot{S}_{mk}(t) &= -\mathbb{E}\left[\left\langle \partial_x f(X_{\ell}^1S_{\ell i} W^1_i))X_k^1(t,\cdot)\right\rangle W_m^1\right]\\
S_{mk}(t_0) &= M_{m\ell}S_{\ell j}^0 N_{kj}
\end{align*}
and set $S^1 = S(t_0+\Delta t)$.
\end{enumerate}
Note that the first and second steps can be performed in parallel. Furthermore, the unconventional integrator inherits the exactness and robustness properties of the classical matrix projector-splitting integrator, see \cite{ceruti2020unconventional}. Additionally, it allows for an efficient use of rank adaptivity \cite{CKL2021}.

\section{Matrix projector-splitting integrator for uncertain hyperbolic problems}\label{sec:ProjectorSplitting}
Before deriving the evolution equations for scalar problems, we first discuss hyperbolicity.
\begin{theorem}\label{th:hyperbolicityKStep}
Provided the original scalar equation \eqref{eq:hyperbolicProblem} is hyperbolic, its corresponding $K$-step equation \eqref{eq:KstepGeneral} is hyperbolic as well, i.e., its flux Jacobian is diagonalizable with real eigenvalues.
\end{theorem}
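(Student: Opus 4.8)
The plan is to recast the $K$-step as a first-order system of conservation laws and to show that its flux Jacobian is a real symmetric matrix; diagonalizability with real eigenvalues then follows immediately from the spectral theorem.

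First I would rewrite \eqref{eq:KstepGeneral}. Writing $u_{\RomanNumeralCaps{1}}(t,x,\xi)=\sum_{\ell=1}^r K_\ell(t,x)W_\ell^0(\xi)$ and pulling the spatial derivative out of the expectation, the $K$-step becomes, for $n=1,\dots,r$,
\begin{align*}
\partial_t K_n(t,x) + \partial_x F_n(\bm K(t,x)) = 0, \qquad F_n(\bm K) := \mathbb{E}\Big[\,f\Big(\textstyle\sum_{\ell=1}^r K_\ell W_\ell^0\Big)\, W_n^0\,\Big],
\end{align*}
a system in the unknown vector $\bm K=(K_1,\dots,K_r)^T$. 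Its flux Jacobian $\bm B(\bm K)=(\partial F_n/\partial K_m)_{n,m=1}^r$ is obtained by differentiating under the expectation (licit by smoothness of $f$), giving
\begin{align*}
B_{nm}(\bm K) = \mathbb{E}\Big[\,f'\Big(\textstyle\sum_{\ell=1}^r K_\ell W_\ell^0\Big)\, W_m^0\, W_n^0\,\Big].
\end{align*}
The crux — and it is a one-line observation — is that $B_{nm}=B_{mn}$, since the integrand is symmetric under exchange of $m$ and $n$, and that $\bm B$ is real: because \eqref{eq:hyperbolicProblem} is a scalar hyperbolic equation, $f$ is a real flux and $f'(u)$ is real for every admissible real state $u$, hence $f'(u_{\RomanNumeralCaps{1}})$ is a real-valued function of $\xi$ and each $B_{nm}$ is a real number. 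A real symmetric matrix is orthogonally diagonalizable with real eigenvalues, so the $K$-step equation is hyperbolic.

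I do not expect a genuine obstacle here. The only points deserving a word of care are (i) justifying the interchange of differentiation and expectation, which requires nothing beyond smoothness of $f$ (and, if one wants to be fully rigorous, a bound on $f'$ over the range of $u_{\RomanNumeralCaps{1}}$), and (ii) recognizing that this is precisely the time-dependent-basis analogue of the classical fact that the SG moment Jacobian $\mathbb{E}[f'(u_N)\varphi_i\varphi_j]$ is symmetric; replacing the fixed gPC basis $\bm\varphi$ by the evolving orthonormal basis $W^0$ leaves the symmetry argument untouched, and orthonormality of $W^0$ is in fact not even needed for the conclusion.
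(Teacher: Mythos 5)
Your proposal is correct and follows essentially the same route as the paper: both identify the flux Jacobian as the matrix with entries $\mathbb{E}\left[f'\left(u_{\RomanNumeralCaps{1}}\right)W_m W_n\right]$ (the paper via the chain rule on the quasilinear form, you via differentiating the flux under the expectation) and conclude by symmetry and the spectral theorem. Your added remarks on interchanging differentiation and expectation and on the analogy with the SG moment Jacobian are sound but not needed beyond what the paper states.
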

\begin{proof}
First, we apply the chain rule to the $K$-step equation to obtain
\begin{align*}
\partial_t K_n(t,x) = -\mathbb{E}\left[f'\left(u_{\RomanNumeralCaps{1}}(t,x,\cdot)\right)\sum_{j=1}^{r}\partial_x K_j(t,x)W_{j}(t,\cdot)W_n(t,\cdot)\right].
\end{align*}
Linearity of the expected value gives
\begin{align*}
\partial_t K_n(t,x) = -\sum_{j=1}^{r}\mathbb{E}\left[f'\left(u_{\RomanNumeralCaps{1}}(t,x,\cdot)\right)W_{j}(t,\cdot)W_n(t,\cdot)\right] \partial_x K_j(t,x).
\end{align*}
Then, the flux Jacobian is obviously symmetric, i.e., by the spectral theorem it is diagonalizable with real eigenvalues.
\end{proof}
We remind the reader that the $K$-step of the unconventional integrator equals the $K$-step of the matrix projector-splitting integrator. Therefore, Theorem~\ref{th:hyperbolicityKStep} holds for both numerical integrators. Note that this result only holds for scalar equations. In the system case, hyperbolicity cannot be guaranteed. Methods to efficiently guarantee hyperbolicity for systems are not within the scope of this paper and will be left for future work.

Now, we apply the matrix projector-splitting integrator presented in Section~\ref{sec:splittingIntegrator} to Burgers' equation
\begin{subequations}\label{eq:Burgers}
\begin{align}
\partial_t &u(t,x,\xi)+\partial_x \frac{(u(t,x,\xi))^2}{2} = 0, \\
&u(t=0,x,\xi) = u_{\text{IC}}(x,\xi),\\
u(t,x_L,\xi) &= u_L(t,\xi) \enskip \text{ and } \enskip u(t,x_R,\xi) = u_R(t,\xi).
\end{align}
\end{subequations}
Our goal is to never compute the full solutions $u_{\RomanNumeralCaps{1}}(t,x,\xi)$, $u_{\RomanNumeralCaps{2}}(t,x,\xi)$ and $u_{\RomanNumeralCaps{3}}(t,x,\xi)$ but to rather work on the decomposed quantities saving memory and reducing computational costs. Here, we represent the uncertain basis functions with gPC polynomials, i.e.,
\begin{align}\label{eq:basisW}
    W_i(t,\xi) = \sum_{m=0}^N \widehat{W}_{mi}(t)\varphi_m(\xi) \qquad \text{ for } i = 1,\cdots,r.
\end{align}
The spatial basis functions are represented with the help of finite volume (or DG0) basis functions 
\begin{align}\label{eq:DG0Basis}
   Z_\ell(x) = \frac{1}{\sqrt{\Delta x}}\chi_{[x_{\ell-1/2},x_{\ell+1/2}]}(x),
\end{align}
where $\chi_A$ is the indicator function on the interval $A\subset\mathbb{R}$. Then, the spatial basis functions are given by
\begin{align}
    X_i(t,x) = \sum_{m=1}^{N_x} \widehat{X}_{mi}(t)Z_m(x) \qquad \text{ for } i = 1,\cdots,r.
\end{align}
In the following, we present an efficient evaluation of non-linear terms that arise in the matrix projector-splitting integrator for the non-linear equations.
\subsection{$K$-step}
Let us start with the $K$-step, which decomposes the solution $u_{\RomanNumeralCaps{1}}$ into
\begin{align*}
u_{\RomanNumeralCaps{1}}(t,x,\xi) = \sum_{j=1}^{r}K_j(t,x)W_{j}(t,\xi).
\end{align*}
Plugging this representation as well as the quadratic flux of Burgers' equation into \eqref{eq:KstepGeneral} gives
\begin{align*}
\partial_t K_m(t,x) =& -\mathbb{E}\left[\partial_{x}f\left(u_{\RomanNumeralCaps{1}}(t,x,\cdot)\right)W_m(t,\cdot)\right] \\
=& -\frac12 \partial_x \mathbb{E}\left[ \sum_{i,j=1}^r K_i(t,x) W_i K_j(t,x) W_j W_m \right] \\
=& -\frac12\partial_x \Big(\sum_{i,j=1}^r K_i K_j \underbrace{\mathbb{E}\left[W_i W_j W_m\right]}_{=:a_{ijm}}\Big)
\end{align*}
Defining $\bm{K} = (K_1,\cdots,K_r)^T$ yields the differential equation
\begin{align}\label{eq:KStep1}
\partial_t K_m(t,x) = -\frac12\partial_x\left(\bm K^T \bm A_m \bm K\right) \qquad \text{ for } m = 1,\cdots,r
\end{align}
which according to Theorem~\ref{th:hyperbolicityKStep} is guaranteed to be hyperbolic. Representing the spatial coordinate with $N_x$ points, a number of $O(N_x\cdot r^3)$ evaluations per time step is needed for the evaluation of \eqref{eq:KStep1}. The terms $\bm A_m:=(a_{ijm})_{i,j=1}^r$ can be computed by applying a quadrature rule with $N_q$ weights $w_k$ and points $\xi_k$. Then, we can compute $W_i(t,\xi_k)=\sum_{\ell=0}^{N}\widehat{W}_{\ell i}\varphi_{\ell}(\xi_k)$ for $i=1,\cdots,r$ and $k = 1,\cdots,N_q$ in $O(r\cdot N \cdot N_q)$ operations. Furthermore, one can compute 
\begin{align}\label{eq:computationA}
    a_{ijm} = \sum_{k=1}^{N_q}w_k W_i(t,\xi_k) W_j(t,\xi_k) W_m(t,\xi_k)
\end{align}
in $O(N_q\cdot r^3)$ operations. Hence, the total costs for the $K$-step, which we denote by $C_K$ are
\begin{align*}
    C_K \lesssim N_t\cdot \max\left\{ N_x\cdot r^3,N_q\cdot r^3,r\cdot N \cdot N_q \right\}.
\end{align*}
It is important to point out the essentially quadratic costs of $r\cdot N \cdot N_q$. This non-linear term stems from the modal gPC approximation of the uncertain basis $W$. We will discuss a nodal approximation in Section~\ref{sec:nodal}, which yields linear costs. If we denote the memory requires for the $K$-step by $M_K$, we have
\begin{align*}
    M_K \lesssim \max\left\{ N_x\cdot r,N \cdot N_q \right\}
\end{align*}
when precomputing and storing the $N \cdot N_q$ terms $\varphi_i(\xi_k)$.
\subsection{$S$-step}
For the $S$-step, the basis functions $X_{\ell}$ and $W_i$ remain constant in time, i.e., we have
\begin{align}
u_{\RomanNumeralCaps{2}}(t,x,\xi)=\sum_{i,\ell=1}^r X_{\ell}(x) S_{\ell i}(t) W_i(\xi).
\end{align}
Plugging this representation into the $S$-step \eqref{eq:SstepGeneral} yields
\begin{align}
\dot{S}_{km}(t) =& \frac12\mathbb{E}\left[\left\langle \sum_{i,\ell=1}^r \sum_{j,q=1}^r \partial_x \left(X_{\ell}S_{\ell i} W_i X_{q} S_{q j} W_j\right)X_k\right\rangle W_m\right]\nonumber\\
=& \frac12\sum_{\ell=1}^r \sum_{q=1}^r \left\langle \partial_x\left(X_{\ell} X_{q}\right)X_k\right\rangle \cdot \mathbb{E}\left[\sum_{i =1}^r S_{\ell i} W_i \sum_{j=1}^r S_{q j}W_j W_m\right]\nonumber \\
=& \frac12\sum_{\ell=1}^r \sum_{q=1}^r \left\langle \partial_x\left(X_{\ell} X_{q}\right)X_k\right\rangle \cdot \mathbb{E}\left[L_{\ell} L_q W_m\right] \label{eq:SStep2}
\end{align}
Using a sufficiently accurate quadrature rule enables an efficient computation of the terms $\mathbb{E}\left[L_{\ell} L_q W_m\right]$. The values of $L_{q}(t,\xi_k)$ for $k=1,\cdots,N_q$ and $q=1,\cdots,r$ can be computed in $O(r\cdot N\cdot N_q)$ operations, since
\begin{align*}
L_{q}(t,\xi_k) = \sum_{m=1}^r W_m(t,\xi)S_{mq}(t)=\sum_{m=1}^r \sum_{i=0}^{N} \widehat W_{mi}(t) \varphi_i(\xi_k)S_{mq}(t)=\sum_{i=0}^{N} \widehat L_{qi} \varphi_i(\xi_k).
\end{align*}
The terms $\varphi_i(\xi_k)$ can be precomputed and stored. Furthermore, we make use of $\widehat L_{qi}(t,\xi_k):= \sum_{m=1}^r \widehat W_{mi}(t) \varphi_i(\xi_k)S_{mq}(t)$.
Then, a sufficiently accurate quadrature (e.g. a Gauss quadrature with $N_q = \lceil{1.5\cdot N+1}\rceil$) yields 
\begin{align*}
\mathbb{E}\left[L_{\ell} L_q W_m\right] = \sum_{k=1}^{N_q} w_k L_{\ell}(t,\xi_k)L_{q}(t,\xi_k)W_m(\xi_k)f_{\Xi}(\xi_k) \qquad \text{ for } \ell,q,m = 1,\cdots,r
\end{align*}
in $O(r^3\cdot N_q)$ operations. The derivative term in $\left\langle \partial_x\left(X_{\ell} X_{q}\right)X_k\right\rangle$ can be approximated with a finite volume stencil or a simple finite difference stencil. Dividing the spatial domain into $N_x$ points $x_1 < x_2 < \cdots<x_{N_x}$ lets us define elements $[x_j-\Delta x/2, x_j+\Delta x/2]$. Then, at spatial position $x_j$ we choose the finite difference approximation
\begin{align}\label{eq:derTerm1}
\partial_x\left(X_{\ell}(x) X_{q}(x)\right)\big|_{x=x_j} \approx \frac{1}{2\Delta x}\left( X_{\ell}(x_{j+1}) X_{q}(x_{j+1}) - X_{\ell}(x_{j-1}) X_{q}(x_{j-1})  \right).
\end{align}
Again choosing DG0 basis functions yields
\begin{align*}
\partial_x\left(X_{\ell}(x) X_{q}(x)\right)\big|_{x=x_j} \approx \frac{1}{2\Delta x^2}\left( \widehat{X}_{j+1,\ell} \widehat{X}_{j+1,q} - \widehat{X}_{j-1,\ell} \widehat{X}_{j-1,q}  \right).
\end{align*}
Summing over spatial cells to approximate the spatial integral $\left\langle \partial_x\left(X_{\ell} X_{q}\right)X_k\right\rangle$ gives
\begin{align}\label{eq:intTerm1}
\left\langle \partial_x\left(X_{\ell} X_{q}\right)X_k\right\rangle \approx \Delta x^{-3/2}\sum_{j=1}^{N_x}\frac{1}{2}\left( \widehat{X}_{j+1,\ell} \widehat{X}_{j+1,q} - \widehat{X}_{j-1,\ell} \widehat{X}_{j-1,q}  \right)\widehat X_{j,k}.
\end{align}
The required number of operations to compute this term for $\ell,q,k = 1,\cdots,r$ is $O(r^3\cdot N_x)$. Furthermore, the memory requirement is $O(r\cdot N_x)$ to store all $\widehat x_{j,\ell}$ as well as $O(r^3)$ to store all integral terms \eqref{eq:intTerm1}. Then the multiplication in \eqref{eq:SStep2} requires $O(r^4)$ operations. Hence the total costs and memory requirements for the $S$-step are 
\begin{align*}
    C_S \lesssim N_t\cdot r^3\cdot N_x \qquad \text{ and } \qquad M_S\lesssim \max\left\{r\cdot N_x, r^3\right\}.
\end{align*}


\subsection{$L$-step}
The final step is the $L$-step \eqref{eq:LstepGeneral}, which for Burgers' equation becomes
\begin{align}\label{eq:LStep1}
\partial_t L_k(t,\xi) &= \left\langle -\partial_x f(u_{\RomanNumeralCaps{3}}(t,\cdot,\xi))X_k(t,\cdot)\right\rangle\nonumber \\
&=\left\langle -\partial_x \left(\frac12\sum_{i,m=1}^r X_i(t,\cdot)L_i(t,\xi)X_m(t,\cdot)L_m(t,\xi)\right) X_k(t,\cdot)\right\rangle\nonumber\\
&=-\sum_{i,m=1}^r \frac12 L_i(t,\xi)L_m(t,\xi) \left\langle \partial_x  (X_i(t,\cdot)X_m(t,\cdot)) X_k(t,\cdot)\right\rangle.
\end{align}
To obtain a time evolution equation of the expansion coefficients $\widehat L_{ik}\in\mathbb{R}^{N+1\times r}$ such that 
\begin{align}
   L_k(t,\xi) \approx \sum_{i=0}^N \widehat L_{ik}(t)\varphi_i(\xi) 
\end{align}
we test \eqref{eq:LStep1} with $\varphi_i$, which gives
\begin{align}\label{eq:Lstep2}
    \partial_t \widehat{L}_{\ell k}(t) =-\sum_{i,m=1}^r \frac12\mathbb{E}\left[L_i(t,\cdot)L_m(t,\cdot)\varphi_{\ell}\right] \left\langle \partial_x  (X_i(t,\cdot)X_m(t,\cdot)) X_k(t,\cdot)\right\rangle.
\end{align}
The terms $\mathbb{E}\left[L_i(t,\cdot)L_m(t,\cdot)\varphi_{\ell}\right]$ can be computed analogously to \eqref{eq:computationA} in $O(r^2\cdot N\cdot N_q)$ operations, taking up memory of $O(N\cdot r^2)$. Precomputing the terms $\varphi_i(\xi_k)$ again has memory requirements of $O(N\cdot N_q)$. The term $\left\langle \partial_x  (X_i(t,\cdot)X_m(t,\cdot)) X_k(t,\cdot)\right\rangle$ can be reused from the $S$-step computation and the multiplication in \eqref{eq:Lstep2} requires $O(N\cdot r^3)$ operations. Therefore, the overall costs and memory requirements for the $L$-step are
\begin{align*}
    C_L \lesssim N_t\cdot r^2\cdot N\cdot N_q \qquad \text{ and } \qquad M_L \lesssim \max\left\{N\cdot r^2, N\cdot N_q\right\}.
\end{align*}
Note that the QR decompositions needed to compute $S,X,W$ from $K$ and $L$ require $O(r^2\cdot N_x)$ as well as $O(r^2\cdot N)$ operations respectively in every time step.

\subsection{Filtered Matrix projector-splitting integrator}\label{sec:FilterDLR}
Similar to filtered stochastic-Galerkin, we wish to apply a filtering step in between time steps. Let us write the low-rank solution as
\begin{align*}
    &u_r(t,x,\xi) = \sum_{\ell=1}^r X_{\ell}(t,x) \sum_{i=0}^N \widehat{L}_{i\ell}(t)\varphi_i(\xi) = \sum_{i=0}^N \alpha_{i}(t,x)\varphi_i(\xi),\\
    &\text{where } \bm{\alpha}(t,x) := \left(\sum_{\ell=1}^r X_{\ell}(t,x)\widehat{L}_{i\ell}\right)_{i=0}^N.
\end{align*}
Following the idea of filtering, we now wish to determine $\bm\alpha$ such that the solution representation minimizes the L$^2$-error in combination with a term punishing oscillatory solution values. Equivalently to the derivation of the fSG ansatz \eqref{eq:filterOptProblem}, this gives
\begin{align*}
    \bm{\widehat{\alpha}} = \argmin_{\bm{\alpha}\in\mathbb{R}^{N+1}}\left\{\mathbb{E}\left[ \left(\bm{\alpha}^T\bm{\varphi} - u_{ex}\right)^2 \right] + \lambda \mathbb{E}\left[\left( \mathcal{L}\bm{\alpha}^T\bm{\varphi}\right)^2\right]\right\}.
\end{align*}
As before we have 
\begin{align*}
    \widehat{\alpha}_i = \frac{1}{1+\lambda i^2 (i-1)^2}\mathbb{E}[u_{ex}\varphi_i].
\end{align*}
Hence, the filtered expansion coefficients are again the original expansion coefficients multiplied by a dampening factor $g_{\lambda}(i):=\frac{1}{1+\lambda i^2 (i-1)^2}$. In order to preserve the low-rank structure, we apply this factor to the coefficients $\widehat{L}_{i\ell}$, i.e., the filtered coefficients are given by $\widetilde{L}_{i\ell} = g_{\lambda}(i)\widehat{L}_{i\ell}$. As for fSG, the filter is applied after every full time step. I.e., for the matrix projector-splitting integrator, the filter is applied after the $L$-step and for the unconventional integrator, we filter after the $S$-step. Here, one can determine a filtered $W^1$ after the $S$-step by a QR factorization of the filtered coefficients $\widetilde{L}_{i\ell}$ or apply the filter directly on $W^1$.

\subsection{Nodal discretization}\label{sec:nodal}
Note that the $L$-step has cost $C_L \lesssim N_t\cdot r^2\cdot N\cdot N_q$. To compute all arising integrals, the number of Gauss quadrature points must be chosen as $N_q = \left\lceil \frac32 N -1 \right\rceil$. Hence, the number of gPC polynomials goes into the costs quadratically. To guarantee linearity with respect to $N$, a nodal (or collocation) discretization can be chosen for the random domain. Hence, the functions $L_k$ are described on a fixed set of collocation points $\xi_1,\cdots,\xi_{N_q}\in\Theta$, leading to the discrete function values $L_{qk}(t) := L_k(t,\xi_q)$. Then, the $L$-step \eqref{eq:LStep1} can be written as
\begin{align}\label{eq:LStepNodal}
\partial_t L_{qk}(t) =-\sum_{i,m=1}^r \frac12 L_{qi}L_{qm} \left\langle \partial_x  (X_i(t,\cdot)X_m(t,\cdot)) X_k(t,\cdot)\right\rangle.
\end{align}
In this case, the numerical costs become $C_L^{\text{nodal}} \lesssim N_t\cdot r^3\cdot N_q$, i.e., the number of quadrature points affects the costs linearly. Picking the collocation points according to a quadrature rule, integral computations over the random domain can be computed efficiently. When replacing the modal $L$-step \eqref{eq:Lstep2} by its nodal approximation \eqref{eq:LStepNodal}, the $K$, $S$ and $L$ equations of DLRA essentially gives the dynamical low-rank analogue to the stochastic-Collocation method. Note however that in contrast to stochastic-Collocation, the derived nodal method for DLRA will be intrusive, since new equations need to be derived and the different quadrature points couple in every time step through integral evaluations. Furthermore, the application of filters becomes more challenging, since the gPC expansion coefficients of $L_k$ are unknown. Computing these coefficients is possible but again leads to quadratic costs with respect to the number of basis functions.

\subsection{Extension to multiple dimensions}\label{sec:multiD}
Note that one of the key challenges facing uncertainty quantification is the curse of dimensionality and the resulting uncontrollable growth in the amount of data to be stored and treated. To outline how the dynamical low-rank method tackles this challenge, we now focus on discussing the applicability, costs and memory requirements of DLRA with higher dimensional uncertainties. 
A naive extension to multi-D can be derived by including additional uncertainties in the $W_i$ basis. For two-dimensional uncertainties with probability density $f_{\Xi}(\xi_1,\xi_2)=f_{\Xi_1}(\xi_1)f_{\Xi_2}(\xi_2)$, where $\xi_i\in\Theta_i$, the modal approach for the representation of the basis \eqref{eq:basisW} becomes 
\begin{align}\label{eq:basisWMultiD}
    W_i(t,\xi_1,\xi_2) = \sum_{m=0}^N\sum_{k=0}^N \widehat{W}_{mki}(t)\varphi_m(\xi_1)\varphi_k(\xi_2) \qquad \text{ for } i = 1,\cdots,r.
\end{align}
In this case, the $L$-step reads
\begin{align*}
    \partial_t L_k(t,\xi_1,\xi_2) = -\sum_{i,m=1}^r \frac12 L_i(t,\xi_1,\xi_2)L_m(t,\xi_1,\xi_2) \left\langle \partial_x  (X_i(t,\cdot)X_m(t,\cdot)) X_k(t,\cdot)\right\rangle.
\end{align*}
An evolution equation for the modal expansion coefficients is obtained by testing against $\varphi_{\alpha}(\xi_1)$ and $\varphi_{\beta}(\xi_2)$, yielding
\begin{align*}
    \partial_t \widehat{L}_{\alpha \beta k}(t) = -\sum_{i,m=1}^r \frac12 \mathbb{E}[L_i L_m \varphi_{\alpha}\varphi_{\beta}] \left\langle \partial_x  (X_i(t,\cdot)X_m(t,\cdot)) X_k(t,\cdot)\right\rangle.
\end{align*}
An extension to higher dimensions is straight forward. For our naive treatment of uncertainties with dimension $p$, we have
\begin{align*}
    C_K \lesssim N_t\cdot \max\left\{ N_x\cdot r^3,N_q^p\cdot r^3,r\cdot N^p \cdot N_q^p \right\}\qquad &\text{ and } \qquad M_K \lesssim \max\left\{ N_x\cdot r,N \cdot N_q \right\},\\
    C_S \lesssim N_t\cdot r^3\cdot N_x \qquad &\text{ and } \qquad M_S\lesssim \max\left\{r\cdot N_x, r^3\right\},\\
    C_L \lesssim N_t\cdot r^2\cdot N^p\cdot N_q^p \qquad &\text{ and } \qquad M_L \lesssim \max\left\{N^p\cdot r^2, N\cdot N_q\right\}.
\end{align*}
Note that computational requirements become prohibitively expensive for large $p$. The increased numerical costs can be reduced by further splitting the uncertain domain \cite{lubich2013dynamical,lubich2015time,lubich2015timetensor,ceruti2020time}, which we will leave to future work.

\section{Boundary conditions}\label{sec:BC}
So far, we have not discussed how to treat boundary conditions. In this work, we focus on Dirichlet boundary conditions
\begin{align*}
    u_r(t,x_L,\xi) = u_L(\xi) \qquad \text{and} \qquad u_r(t,x_R,\xi) = u_R(\xi).
\end{align*}
Note that Dirichlet values for Burgers' equation are commonly constant in time, which is why we omit time dependency here. A straightforward way to impose boundary conditions is to project the boundary condition onto the DLRA basis functions \cite{sapsis2009dynamically,sapsis2012dynamical}. However, if the low-rank basis cannot represent the boundary condition, the derived projection will not exactly match the imposed Dirichlet values, leading to an error. In this case, we have
\begin{align*}
    u_L(\xi) &\neq \sum_{i=1}^{r} \mathbb{E}\left[ u_r(t,x_L,\cdot) W_{i}(t,\cdot) \right] W_{i}(t,\xi), \\
    u_R(\xi) &\neq \sum_{i=1}^{r} \mathbb{E}\left[ u_r(t,x_R,\cdot) W_{i}(t,\cdot) \right] W_{i}(t,\xi).
\end{align*}
 Therefore, we now discuss how to preserve certain basis functions, which exactly represent the solution at the boundary. The strategy is similar to \cite{einkemmer2021mass}, where basis functions are preserved to guarantee conservation properties. The idea of omitting a constant basis function is also used in the Dynamically Orthogonal (DO) method \cite{sapsis2009dynamically}. In \cite{musharbash2018dual} a method to enforce boundary conditions has been proposed for DO systems. We propose a similar approach to impose boundary conditions for the DLRA approximation of Burgers' equation. Following \cite{musharbash2018dual} we start by modifying the original ansatz \eqref{eq:rankrsol} to
\begin{align}\label{eq:ansatzConserved}
    u(t,x,\xi) \approx u_c(t,x,\xi):= \sum_{i=1}^{N_c} \hat u_i(t,x)V_i(\xi) + \sum_{i,\ell=1}^r X_{\ell}(t,x) S_{\ell i}(t) W_i(t,\xi),
\end{align}
where $V_i$ and $W_i$ form an orthonormal set of basis functions. We aim to preserve the $N_c$ basis functions $V_i:\Theta\rightarrow\mathbb{R}$, which are chosen such that
\begin{align*}
    u_L(\xi) = \sum_{i=1}^{N_c} \mathbb{E}\left[ u_L V_i \right] V_i(\xi), \quad\text{ and }\quad u_R(\xi) = \sum_{i=1}^{N_c} \mathbb{E}\left[ u_R V_i \right] V_i(\xi).
\end{align*}
To ensure $\langle W_i(t,\cdot) V_k \rangle = 0$ for all times $t\in\mathbb{R}_+$, we choose a modal representation
\begin{align}\label{eq:lowrankWspan}
    W_i(t,\xi) = \sum_{j=1}^{N+1}\widehat{W}_{ji}(t)P_j(\xi),
\end{align}
where the orthonormal basis functions $P_j(\xi)$ are constructed such that $\mathbb{E}\left[ P_j V_k \right] = 0$. This can be done by generating the basis functions $P_j$ with Gram-Schmidt and including the Dirichlet values of $u_L$ and $u_R$ as first two functions into the process of generating the basis. I.e., we take $V_1 = \widetilde V_1/\mathbb{E}[\widetilde V_1^2]$ and $V_2 = \widetilde V_2/\mathbb{E}[\widetilde V_2^2]$ with
\begin{align*}
    \widetilde V_1(\xi) = u_L(\xi), \quad \text{and }\enskip \widetilde V_2(\xi) = u_R(\xi) - \frac{\mathbb{E}[\widetilde V_1 u_R]}{\mathbb{E}[\widetilde V_1^2]}.
\end{align*}
The functions to generate the low-rank basis $W_i$ according to \eqref{eq:lowrankWspan} are then computed with
\begin{align*}
    \widetilde P_i(\xi) = \varphi_{i-1}(\xi) - \mathbb{E}[V_1 \varphi_{i-1}]- \mathbb{E}[V_2 \varphi_{i-1}] - \sum_{j=1}^{i-1} \frac{\mathbb{E}[\widetilde P_j \varphi_{i-1}]}{\mathbb{E}[\widetilde P_j^2]}
\end{align*}
as $P_i = \widetilde P_i/\mathbb{E}[\widetilde P_i^2]$. Note that in this case $N_c=2$ is sufficient. Then, evolution equations for $\hat u_i(t,x)$ can be derived by taking moments of the original system \eqref{eq:hyperbolicProblem}, i.e.,
\begin{align}
\partial_t \hat u_i(t,x) + \partial_x \mathbb{E}\left[f(u_c(t,x,\xi))V_i \right] = 0\qquad\text{for } i = 1,\cdots,N_c.
\end{align}
The low-rank part of \eqref{eq:ansatzConserved} is then solved with a classical dynamical low-rank method. Now, to ensure that the ansatz \eqref{eq:ansatzConserved} matches the imposed boundary values, we must prescribe the condition
\begin{align*}
    K_i(t,x_L) &= \mathbb{E}\left[ u_c(t,x_L,\xi) W_i(t,\cdot) \right] = \mathbb{E}\left[ u_L W_i(t,\cdot) \right] = 0, \\
    \hat u_i(t,x_L) &= \mathbb{E}\left[ u_c(t,x_L,\xi) V_i \right] = \mathbb{E}\left[ u_L V_i \right].
\end{align*}
Similarly, for the right boundary we have
\begin{align*}
    K_i(t,x_R)  = 0, \quad\text{ and } \quad \hat u_i(t,x_R) = \mathbb{E}\left[ u_R V_i(t,\cdot) \right].
\end{align*}

\section{Numerical discretization}\label{sec:discretize}
As discussed in Theorem~\ref{th:hyperbolicityKStep}, the $K$-step equation is hyperbolic, meaning that it can be discretized with a finite volume or DG method. Since we have taken DG0 elements to discretize the spatial domain, we will now derive a DG0 method. The derivation will be demonstrated for the modal matrix projector-splitting integrator. The extension to nodal methods and the unconventional integrator are straight forward. To simplify notation, let us collect the spatial expansion coefficients $\widehat K_{jm}$ in the vector $\bm{\widehat K}_{j} = \left(\widehat{K}_{j1},\cdots,\widehat{K}_{jr}\right)^T$ as well as the $K$ variables in a vector $\bm{K}_{j} = \left(K_{j1},\cdots,K_{jr}\right)^T$. Then, taking the $K$-step equation \eqref{eq:KStep1} and testing with $Z_j$ gives
\begin{align*}
&\partial_t \left\langle\sum_{i=1}^{N_x}\widehat{K}_{im}Z_i Z_j\right\rangle = -\frac12\left\langle\partial_x\left(\bm{K}^T \bm A_m \bm{K}\right)  Z_j\right\rangle  \\
\Leftrightarrow \; &\partial_t \widehat{K}_{jm} = -\frac12\left[\bm{K}^T \bm A_m \bm{K} Z_j\right]_{x_{j-1/2}}^{x_{j+1/2}}-\frac12\int_{x_{j-1/2}}^{x_{j+1/2}} \bm{K}_{i}^T \bm A_m \bm{K}_{\ell}\underbrace{\partial_x Z_j}_{=0}\,dx.
\end{align*}
Here, we used that the support of $Z_j$ is restricted to the spatial cell $[x_{j-1/2},x_{j+1/2}]$. Note that the right hand side requires knowing the vector $\bm K$ at the cell interfaces. These values are approximated with a numerical flux $\bm g:\mathbb{R}^{r}\times\mathbb{R}^r\rightarrow\mathbb{R}^r$, i.e., we have
\begin{align*}
    \frac12\left.\bm{K}^T \bm A_m \bm{K}\right\vert_{x_{j+1/2}} \approx g_m(\bm K_j^n,\bm K_{j+1}^n).
\end{align*}
Choosing the Lax-Friedrichs flux
\begin{align*}
g_m(\bm K_j^n,\bm K_{j+1}^n) = \frac14 \left( \bm K_j^n+\bm K_{j+1}^n\right)^T\bm A_m \left( \bm K_j^n+\bm K_{j+1}^n\right) - \frac{\Delta x}{2\Delta t}\left(K_{j+1,m}^n-K_{j,m}^n\right)
\end{align*}
and remembering that $Z_j(x) = \Delta x^{-1/2}\chi_{[x_{j-1/2},x_{j+1/2}]}(x)$ gives
\begin{align*}
\partial_t\bm{\widehat K}_j = -\frac{1}{\sqrt{\Delta x}}\left( \bm g(\bm K_j,\bm K_{j+1})-\bm g(\bm K_{j-1},\bm K_{j}) \right).
\end{align*}
When choosing a forward Euler time discretization and using the notation $\bm{K}_j^n:=\bm{K}_j(t_n)$ as well as $\bm{\widehat K}_j^n:=\bm{\widehat K}_j(t_n)$ one obtains
\begin{align*}
\bm{\widehat K}_j^{n+1} = \bm{\widehat K}_j^n-\frac{\Delta t}{\sqrt{\Delta x}}\left( \bm g(\bm K_j^n,\bm K_{j+1}^n)-\bm g(\bm K_{j-1}^n,\bm K_{j}^n) \right).
\end{align*}
Slope limiters as well as higher order time discretizations can be chosen to obtain a more accurate discretization. The remaining $S$ and $L$ steps are discretized with a forward Euler discretization. Hence, when defining
\begin{align*}
    &\widetilde{X}_{\ell,q,k}^n = \Delta x^{-3/2}\sum_{j=1}^{N_x}\frac{1}{2}\left( \widehat{X}_{j+1,\ell}^n \widehat{X}_{j+1,q}^n - \widehat{X}_{j-1,\ell}^n \widehat{X}_{j-1,q}^n  \right)\widehat X_{j,k}^n, \\
    &\widetilde{W}_{\ell,q,m}^n = \sum_{k=1}^{N_q} w_k L_{\ell}^n(\xi_k)L_{q}^n(\xi_k)W_m^n(\xi_k)f_{\Xi}(\xi_k),\\
    &\widetilde{L}_{\ell,q,m}^n = \sum_{k=1}^{N_q} w_k L_{\ell}^n(\xi_k)L_{q}^n(\xi_k)\varphi_m(\xi_k)f_{\Xi}(\xi_k),
\end{align*}
the time update formulas of $S$ and $L$ step can be written as
\begin{align*}
    S^{n+1}_{km} =&  S^{n}_{km} + \Delta t \sum_{\ell,q = 1}^r\widetilde{X}_{\ell,q,k}^n \cdot \widetilde{W}_{\ell,q,m}^n, \\
    \widehat L^{n+1}_{mk} =& \widehat{L}^{n}_{mk} - \Delta t \sum_{\ell,q = 1}^r\widetilde{X}_{\ell,q,k}^n \cdot \widetilde{L}_{\ell,q,m}^n.
\end{align*}
Note that here, we are using stabilizing terms in the $K$-step which are not applied to the $S$- and $L$-steps. To obtain a consistent discretization of all three steps, we can also include such stabilizing terms which take the form
\begin{align}
\bar{S}_{km}^n = \frac12\sum_{\ell = 1}^r\sum_{j=1}^{N_x} \widehat X_{jk}^n \left(\widehat X_{j+1,\ell}^n+\widehat X_{j-1,\ell}^n\right) S_{\ell m}^n - S_{km}^n, \\
\bar{L}_{mk}^n = \frac12\sum_{\ell = 1}^r\sum_{j=1}^{N_x} \widehat X_{jk}^n \left(\widehat X_{j+1,\ell}^n+\widehat X_{j-1,\ell}^n\right) L_{\ell m}^n - L_{mk}^n.
\end{align}
Including these terms in the $S$- and $L$-steps yields the modified time updates
\begin{subequations}\label{eq:stableUpdate}
\begin{align}
    S^{n+1}_{km} =&  S^{n}_{km} + \Delta t \sum_{\ell,q = 1}^r\widetilde{X}_{\ell,q,k}^n \cdot \widetilde{W}_{\ell,q,m}^n - \bar{S}_{km}^n, \\
    \widehat L^{n+1}_{mk} =& \widehat{L}^{n}_{mk} - \Delta t \sum_{\ell,q = 1}^r\widetilde{X}_{\ell,q,k}^n \cdot \widetilde{L}_{\ell,q,m}^n+\bar{L}_{mk}^n.
\end{align}
\end{subequations}
Note that this method is consistent in that it includes stabilizing terms which appear when applying the matrix projector-splitting integrator to the fully discretized system. I.e., writing down a stable discretizing of the original equations in $x$ and $\xi$, which gives a matrix differential equation $\bm{\dot{y}}(t) = \bm F(\bm y)$ with $\bm y,\bm F \in\mathbb{R}^{N_x\times N_q}$ and applying the matrix projector-splitting integrator to this system will give the same equations as the stabilized update \eqref{eq:stableUpdate}. In our numerical experiments we always use the stabilized update. We observed that the stabilization terms do not need to be applied for the matrix projector-splitting integrator. However, the unconventional integrator yields poor results if the stabilizing terms are left out.

\section{Numerical results}\label{sec:numResults}
In this section, we represent numerical results to the equations and strategies derived in this work. We make the code to reproduce all results presented in the following available in \cite{code}. Our implementation is used to investigate Burgers' equation \eqref{eq:Burgers} with an uncertain initial condition
\begin{align}\label{eq:IC2}
u_{\text{IC}}(x,\xi) &:= 
\begin{cases} u_L, & \mbox{if } x< x_0+\sigma_1\xi_1 \\
u_R+\sigma_2\xi_2, & \text{else }
\end{cases}.
\end{align}
The initial condition is a shock with an uncertain shock position $x_0+\sigma_1\xi_1$ where $\xi\sim U(-1,1)$ and an uncertain right state $u_R+\sigma_2\xi_2$, where $\xi_2\sim U(0,1)$. At the boundary, we impose Dirichlet values $u_L$ and $u_R(\xi) = u_R+\sigma_2\xi_2$. We choose a CFL condition $\Delta t = CFL\cdot \Delta x / u_L$, where $CFL = 0.5$. The remaining parameter values are:\\
\begin{center}
    \begin{tabular}{ | l | p{7cm} |}
    \hline
    $[x_L,x_R]=[0,1]$ & range of spatial domain \\
    $N_x=600$ & number of spatial cells \\
    $t_{end}=0.01$ & end time \\
    $x_0 = 0.5, u_L = 12, u_R = 1, \sigma_1 = 0.2, \sigma_2 = 5$ & parameters of initial condition \eqref{eq:IC2}\\
    $r = 9, (N+1)^2 = 100, N_q = 256$ & rank, number of moments and quadrature points for DLRA \\
    \hline
    \end{tabular}
\end{center}
The uncertain basis functions are chosen to be the tensorized gPC polynomials with maximum degree of up to order $9$, i.e., we have $10^2$ basis functions. In this setting, we investigate the full SG solution which uses the same basis, since using 10 moments in every dimension is a reasonable choice to obtain satisfactory results. Furthermore, when picking a total number of $r=9$ moments, i.e., choosing total degree $2$ gPC polynomials in every dimension leads to a poor approximation, which is dominated by numerical artifacts. As can be seen in Figure~\ref{fig:ExpVarDLRmodal2D}, the modal DLRA method agrees well with the finely resolved SG solution when using the matrix projector-splitting integrator, especially for the variance. Compared to the matrix projector-splitting integrator, the unconventional integrator shows an improved approximation of the expected value while leading to dampening of the variance. For a clearer picture of this effect, see Figure~\ref{fig:L2Errors}. All in all we observe a heavy reduction of basis functions to achieve satisfactory results by the use of dynamical-low rank approximation. Results computed by the DLRA method match nicely with the finely resolves stochastic-Galerkin method. Thus, we can conclude that DLRA provides an opportunity to battle the curse of dimensionality, since the required memory to achieve a satisfactory solution approximation grows moderately with dimension. When employing tenor approximations as presented in \cite{ceruti2020time}, we expect linear instead of exponential growth with respect to the dimension. However, we leave an extension to higher uncertain domains in which this strategy becomes crucial to future work.
\begin{figure}[H]
    \centering
    \includegraphics[width=0.99\textwidth]{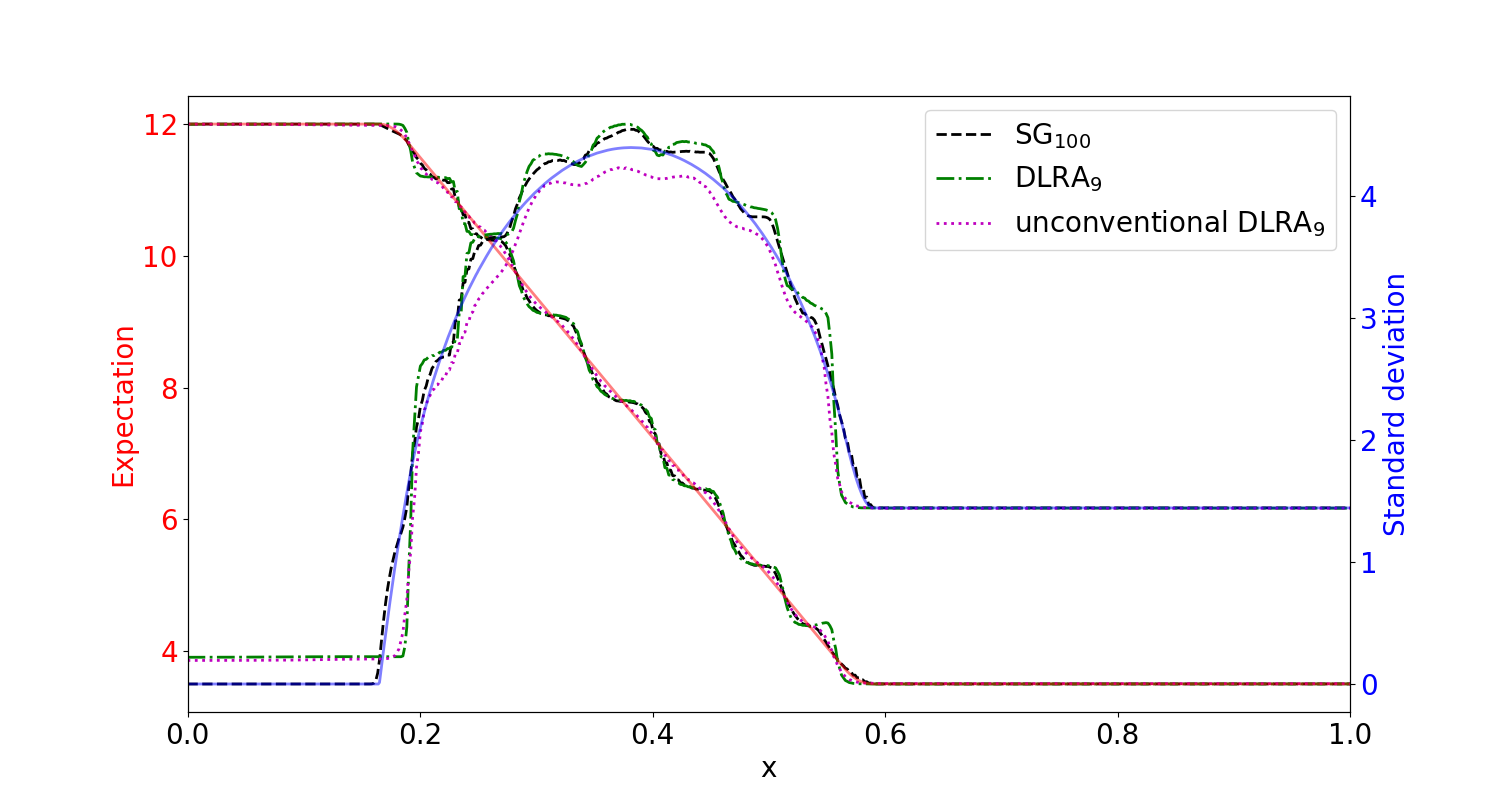}
    \caption{Expectation and variance computed with DLRA (matrix projector-splitting as well as unconventional integrator) and SG method. The rank is $r=9$ and both DLRA and SG use $100$ gPC basis functions as modal discretization. Integrals are computed using $256$ quadrature points, which allows an exact computation of all integral terms.}
    \label{fig:ExpVarDLRmodal2D}
\end{figure}
\begin{figure}[H]
    \centering
    \includegraphics[width=0.99\textwidth]{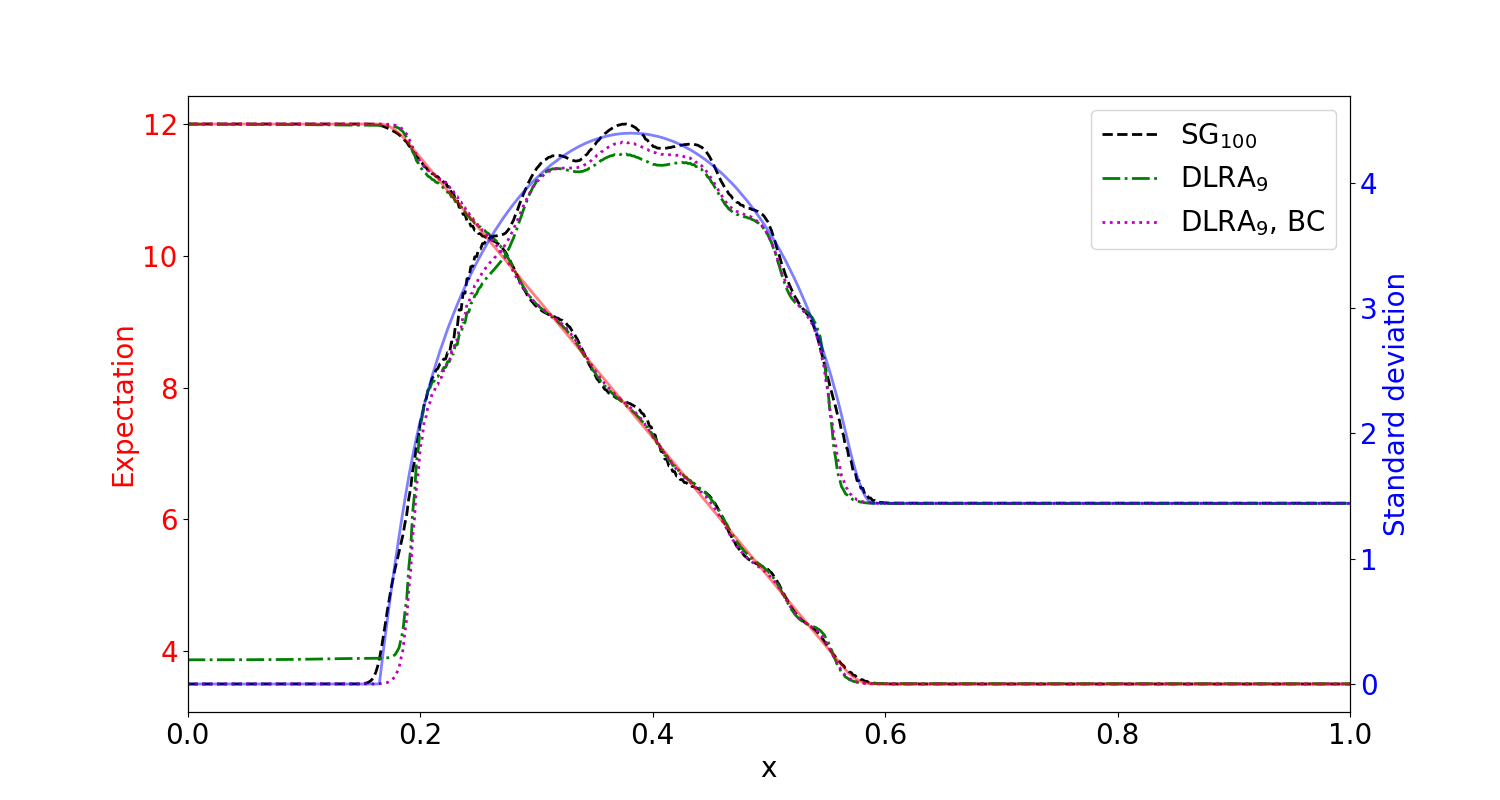}
    \caption{Expectation and variance computed with SG as well as DLRA (unconventional integrator) with and without boundary condition fix according to Section~\ref{sec:BC}. The rank is $r=9$ and both DLRA and SG use $100$ gPC basis functions as modal discretization. Integrals are computed using $256$ quadrature points, which allows an exact computation of all integral terms.}
    \label{fig:ExpVarDLRmodal2DBC}
\end{figure}
In Figure~\ref{fig:ExpVarDLRmodal2DBC}, we present numerical results for the proposed strategy to impose Dirichlet boundary conditions. As a comparison, we include the previous stochastic-Galerkin result as well as the DLRA result when using the unconventional integrator. Taking a look at the left boundary, we observe a violation of the imposed Dirichlet values by the standard DLRA method. We observe that the dynamical low-rank basis cannot represent deterministic solutions, since the constant basis function in $\xi$ will be lost during the computation. According to Section~\ref{sec:BC}, we fix the constant basis as well as the linear basis in $\xi_2$. The remainder is represented with a low-rank ansatz of rank $r=9$ using $10^2-2 = 98$ basis functions in $\xi$. Again, we use the unconventional integrator, which gives the results depicted in Figure~\ref{fig:ExpVarDLRmodal2DBC}. It can be seen that the proposed strategy allows for an exact representation of the chosen Dirichlet values. Furthermore, the strategy improves the solution representation and shows improved agreement with the exact solution.

Let us now turn to the filtered SG method and apply it to the two-dimensional test case. Here, a parameter study leads to an adequate filter strength of $\lambda=10^{-5}$. Taking a look at the resulting fSG approximation in Figure~\ref{fig:ExpVarDLRmodal2DFilters}, we observe a significant improvement of the expected value approximation through filtering. The variance, though dampened by the filter, shows less oscillations and qualitatively agrees well with the exact variance. Note that the use of high-order filters can mitigate dampening effects of the variance, see e.g. \cite{kusch2020oscillation}, however we leave the study of different filters to future work. When comparing the fSG solution with the low-rank methods making use of the same filter as fSG, one sees a close agreement with the finely resolved fSG solution. Note that the unconventional integrator again shows a dampened variance approximation.
\begin{figure}[H]
    \centering
    \includegraphics[width=0.99\textwidth]{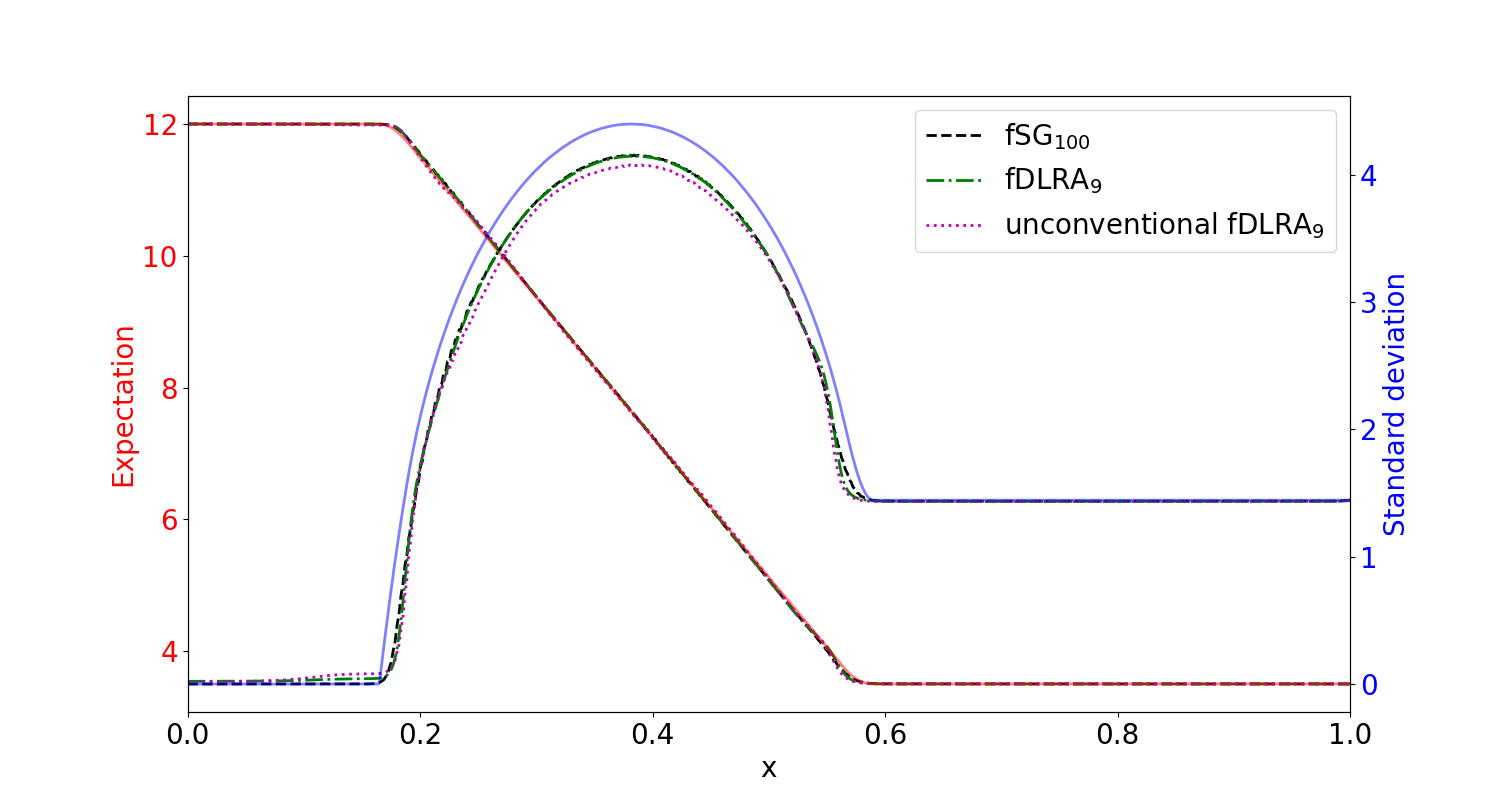}
    \caption{Expectation and variance computed with the DLRA and SG method using the L$^2$ filter. The rank is $25$ and both DLRA and SG use $400$ gPC basis functions. A filter strength of $\lambda=10^{-5}$ is chosen.}
    \label{fig:ExpVarDLRmodal2DFilters}
\end{figure}
Figure~\ref{fig:solutions2DFixedXi} gives a better impression of how the different modal methods behave in the random domain by showing the solution at a fixed spatial position $x^* = 0.42$. The exact solution, which is depicted in Figure~\ref{fig:referenceSolutionFixedXi}, shows a discontinuity in the $\xi_1$-domain as well as a partially linear profile in the $\xi_2$-domain. The modal DLRA method using rank $9$, depicted in Figure~\ref{fig:2DXiSG} agrees well with the SG$_{100}$ method, which is shown in Figure~\ref{fig:2DXiDLR}. Comparing the filtered SG$_{100}$ solution in Figure~\ref{fig:2DXifSG} and the filtered DLRA (fDLRA) solution with rank $9$, shown in Figure~\ref{fig:2DXifDLR}, one sees that again both methods lead to almost identical solutions. As expected, while dampening oscillations, the filter smears out the shock approximation.
\newpage
\newgeometry{top=20mm}
\begin{figure}[H]
	\centering
	\begin{subfigure}{0.5\linewidth}
		\centering
		\includegraphics[width=1.0\linewidth]{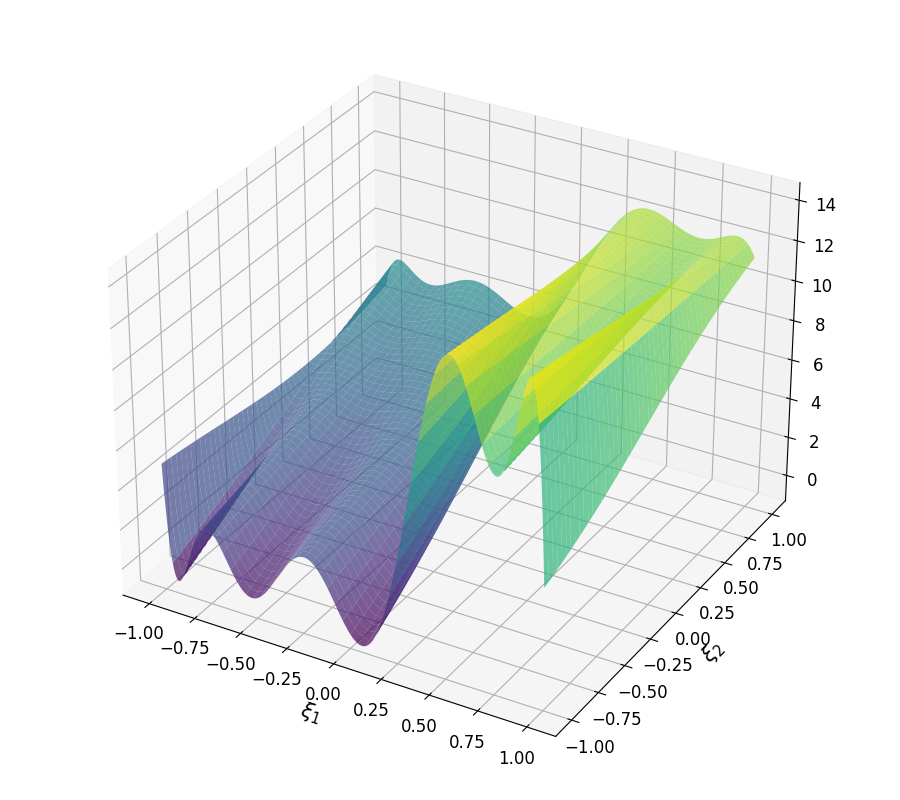}
		\caption{SG$_{100}$}
		\label{fig:2DXiSG}
	\end{subfigure}%
	\hfill
	\begin{subfigure}{0.5\linewidth}
		\centering
		\includegraphics[width=1.0\linewidth]{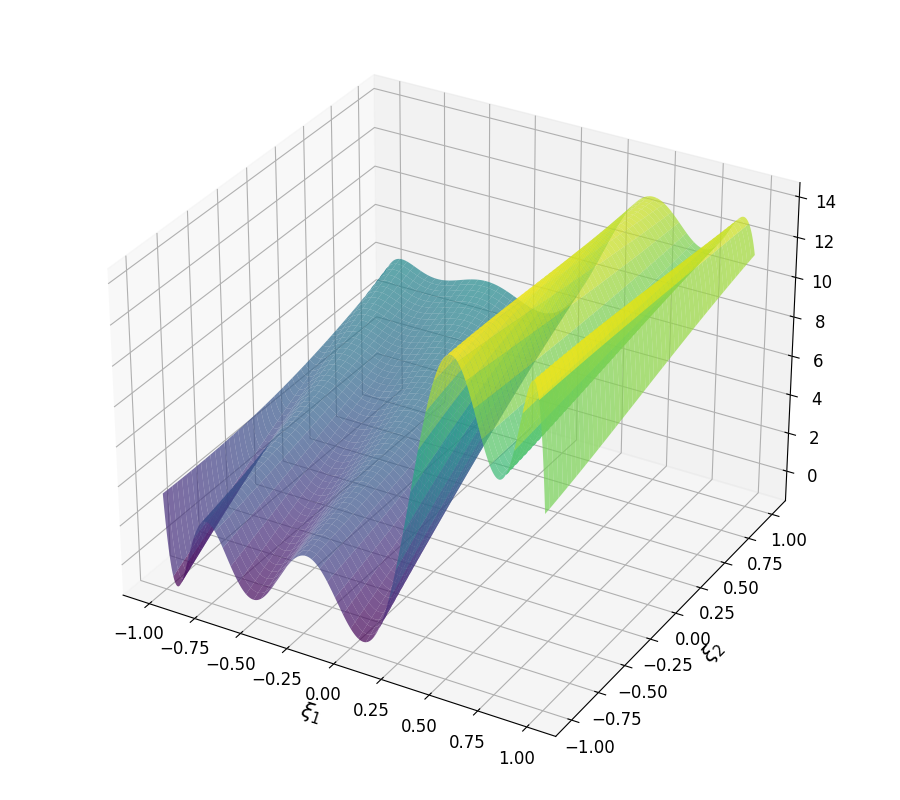}
		\caption{DLRA$_{9}$}
		\label{fig:2DXiDLR}
	\end{subfigure}\\
	\begin{subfigure}{0.5\linewidth}
		\centering
		\includegraphics[width=1.0\linewidth]{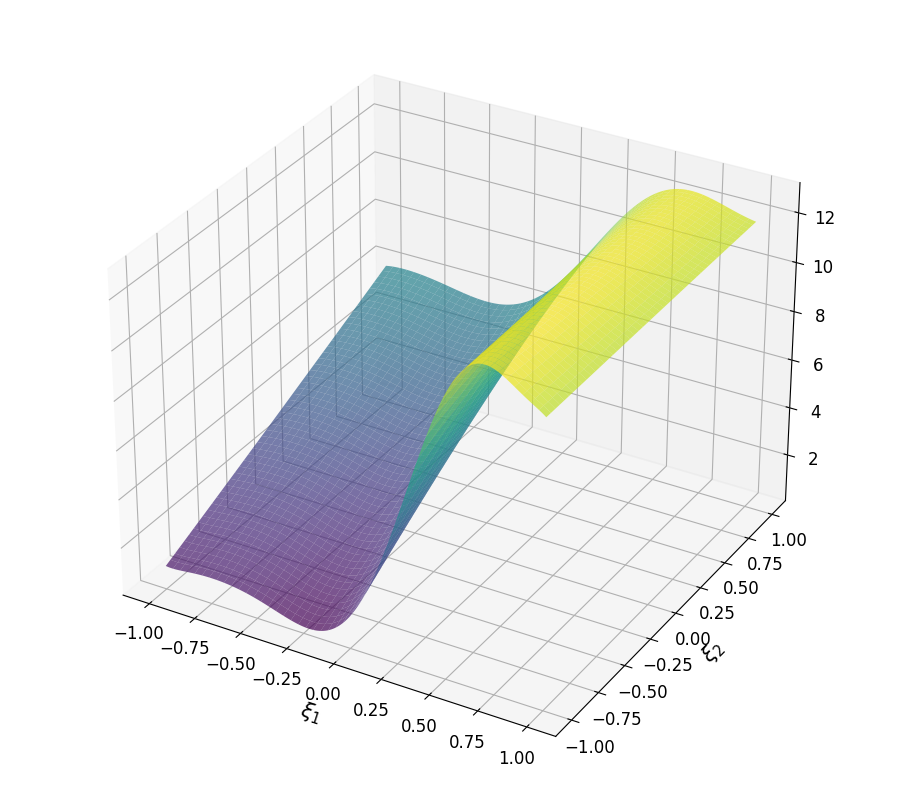}
		\caption{fSG$_{100}$}
		\label{fig:2DXifSG}
	\end{subfigure}%
	\hfill
	\begin{subfigure}{0.5\linewidth}
		\centering
		\includegraphics[width=1.0\linewidth]{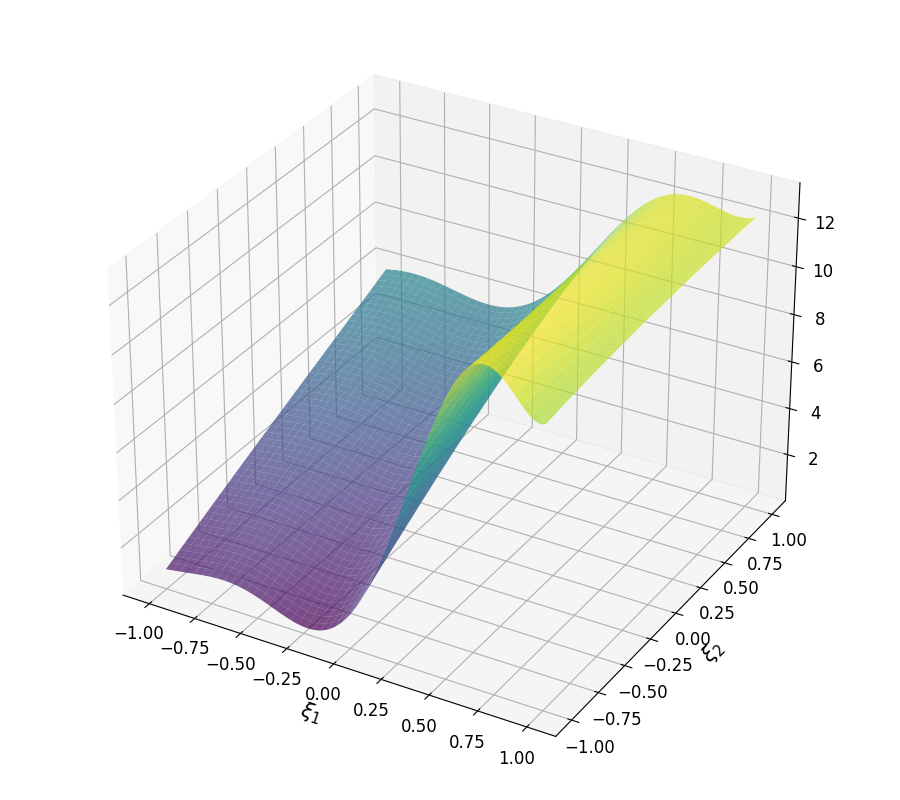}
		\caption{fDLRA$_{9}$}
		\label{fig:2DXifDLR}
	\end{subfigure}\\
	\begin{subfigure}{0.5\linewidth}
		\centering
		\includegraphics[width=1.0\linewidth]{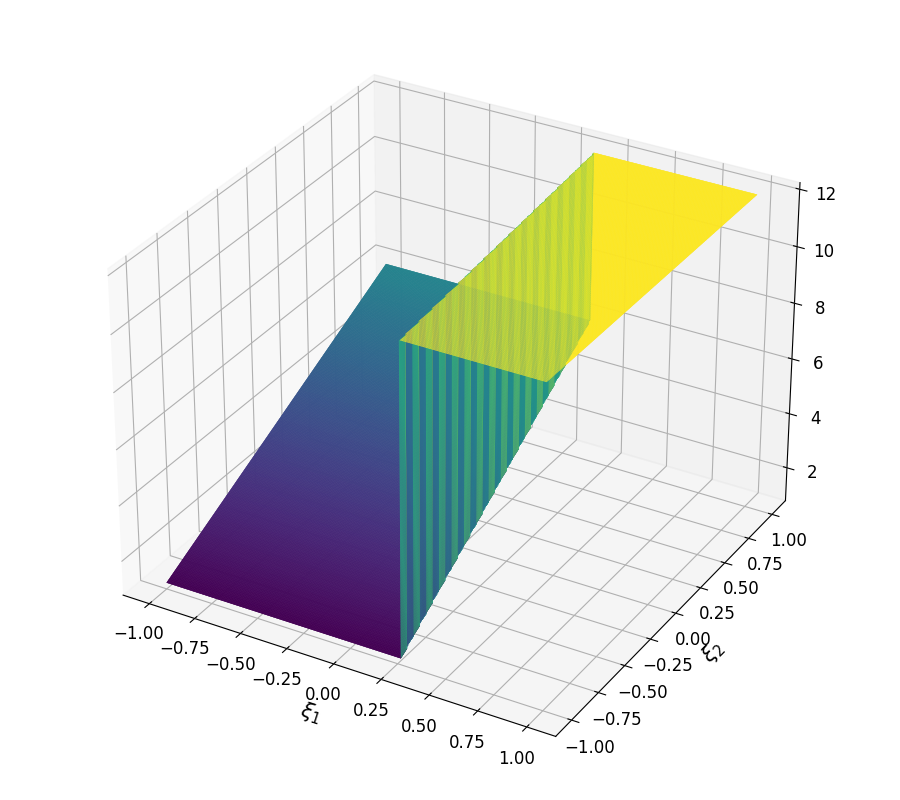}
		\caption{exact}
		\label{fig:referenceSolutionFixedXi}
	\end{subfigure}
	\hfill
	\caption{Results for SG and DLRA using the matrix projector-splitting with and without filters at fixed spatial position $x^* = 0.42$.}
	\label{fig:solutions2DFixedXi}
\end{figure}
\restoregeometry
The nine uncertain basis functions $W_i(t,\bm\xi)$ generated by the modal DLRA$_9$ method when using the matrix projector-splitting integrator at the final time $t_{end}=0.01$ are depicted in Figure~\ref{fig:BasisWFig}. Opposed to SG$_{100}$ which uses gPC basis functions of maximum degree up to $10$ to represent the uncertain domain, the DLRA method picks a set of nine basis functions, which efficiently represent the uncertain domain.
\begin{figure}[H]
    \centering
    \includegraphics[width=0.99\textwidth]{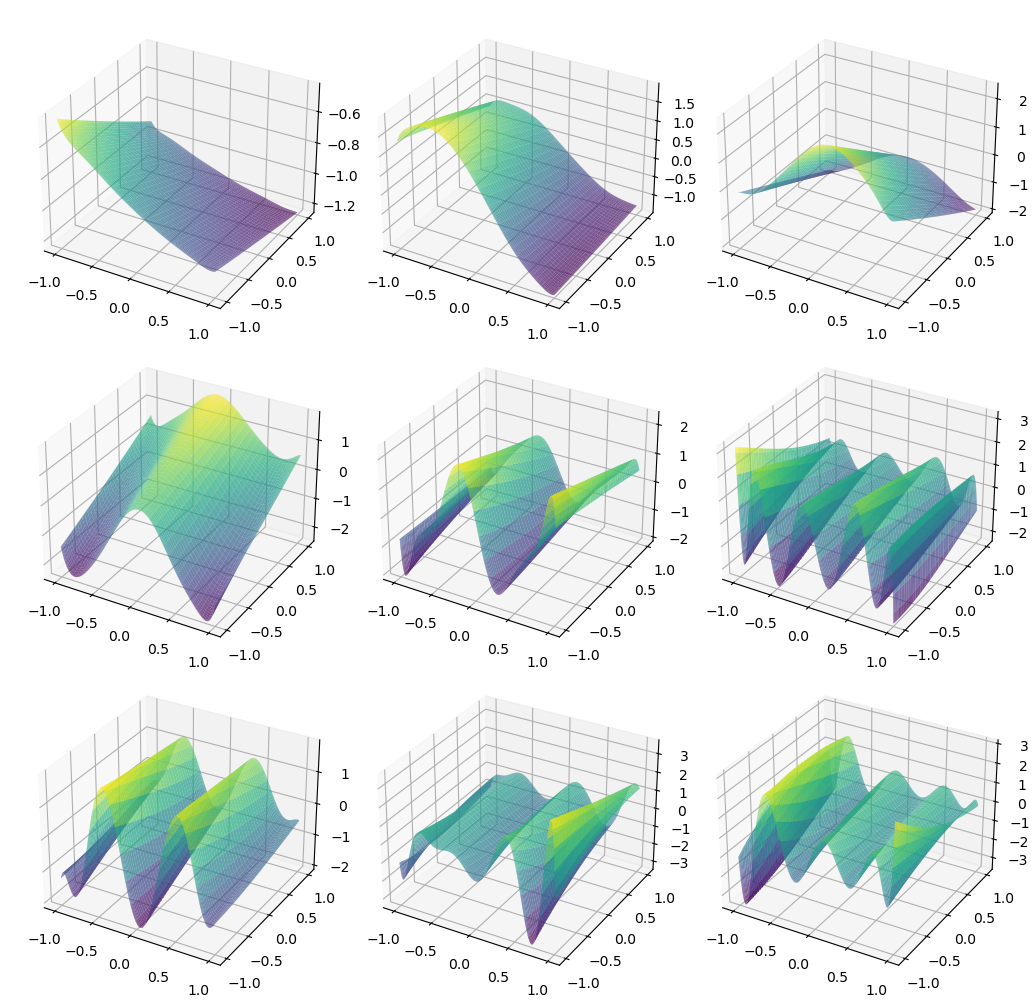}
    \caption{Uncertain basis functions $W_i(t,\bm\xi)$ for $i = 1,\cdots,r$ of the modal DLRA method with the matrix projector-splitting integrator using rank $r = 9$ at time $t_{end}=0.01$.}
    \label{fig:BasisWFig}
\end{figure}
We now turn to studying the approximation quality of DLRA when choosing different ranks in Figure~\ref{fig:L2Errors}. Here, the discrete L$^2$-error of the expectation is depicted in Figure~\ref{fig:L2ErrorsExp}. The inspected methods are SG, where $2^2$, $3^2$ and $4^2$ moments are used, as well as DLRA with matrix projector-splitting and unconventional integrators making use of ranks ranging from $2$ to $16$. Figures~\ref{fig:L2ErrorsExp} and \ref{fig:L2ErrorsVar} depict the L$^2$-error of expectation and variance for the classical methods. Figures~\ref{fig:L2ErrorsExpFilter} and \ref{fig:L2ErrorsVarFilter} depict errors for the filtered methods. First, let us point out that the results indicate a heavily improved error when using the same number of unknowns for DLRA compared to SG. Note however that DLRA requires an increased runtime, since it needs updates of the spatial and uncertain basis functions in addition to updating the coefficient matrix. However, one can state that for the same memory requirement the DLRA method ensures a significantly decreased error for both the expectation and the variance. Comparing the two DLRA integrators, the unconventional integrator leads to an improved approximation of the expectation while the matrix projector-splitting integrator gives an improved variance approximation. The use of filtered DLRA improves the expectation. Furthermore, the filter allows choosing a smaller rank, since the error appears to saturate at a smaller rank. However, the error of the variance increases, which is due to the dampening effect of the variance. Note that the mitigation of spurious oscillations in the variance is not captured by the L$^2$-error. 
\begin{figure}[H]
	\centering
	\begin{subfigure}{0.5\linewidth}
		\centering
		\includegraphics[width=1.0\linewidth]{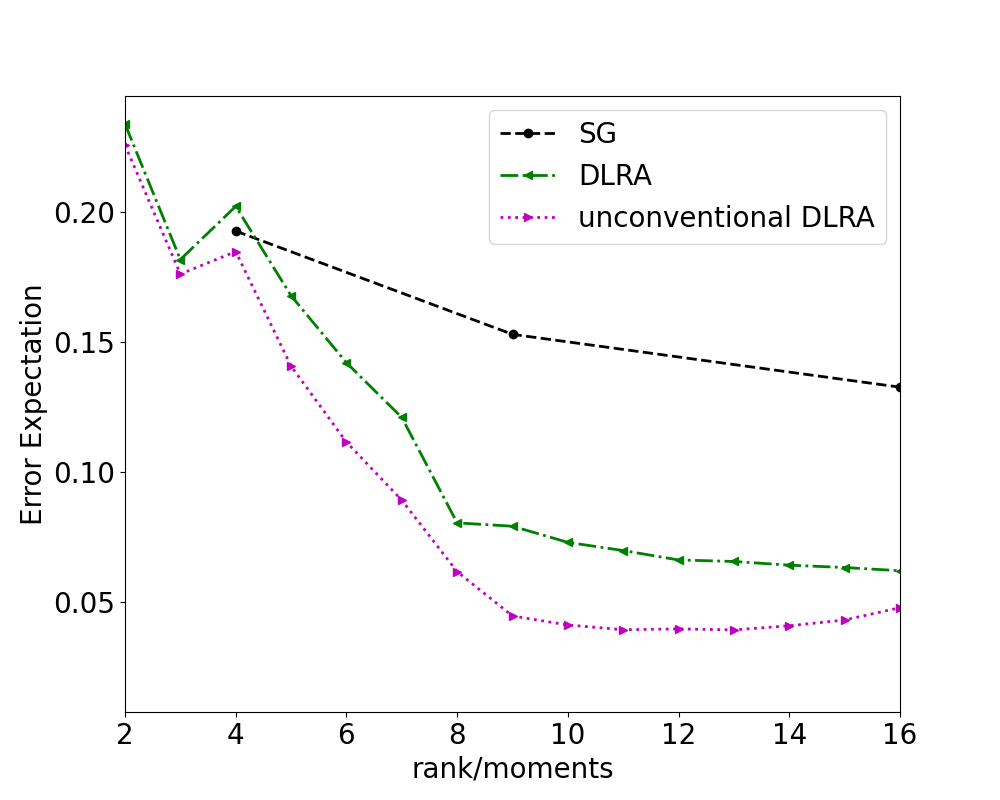}
		\caption{}
		\label{fig:L2ErrorsExp}
	\end{subfigure}%
	\hfill
	\begin{subfigure}{0.5\linewidth}
		\centering
		\includegraphics[width=1.0\linewidth]{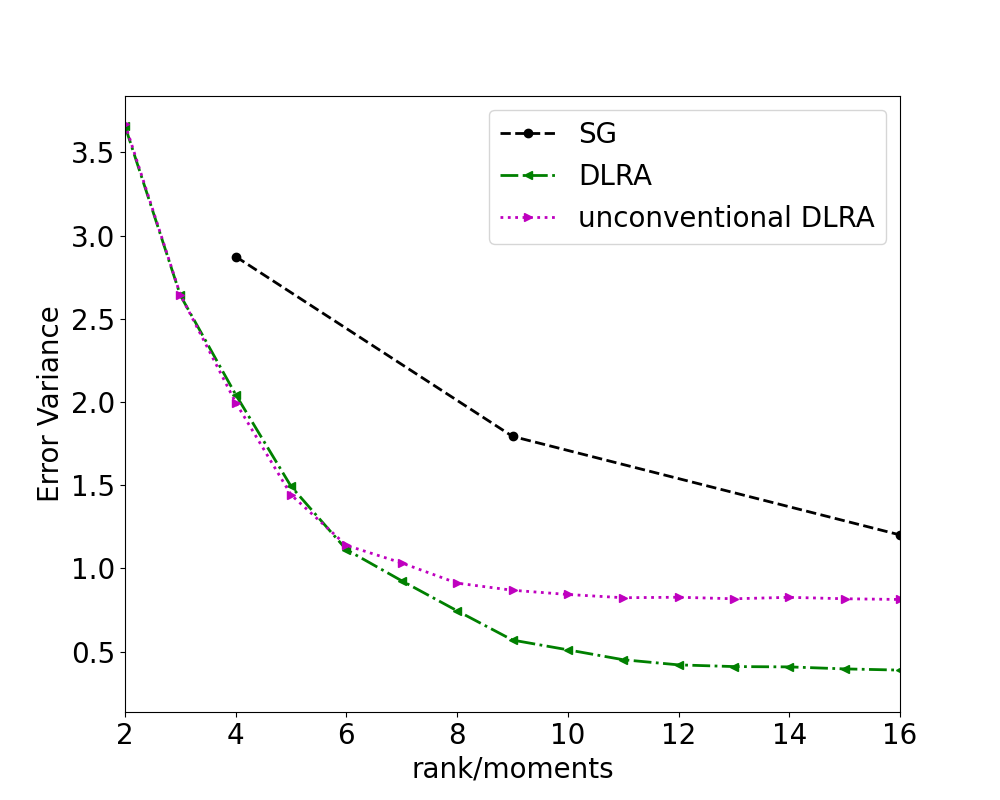}
		\caption{}
		\label{fig:L2ErrorsVar}
	\end{subfigure}\\
	\hfill
	\begin{subfigure}{0.5\linewidth}
		\centering
		\includegraphics[width=1.0\linewidth]{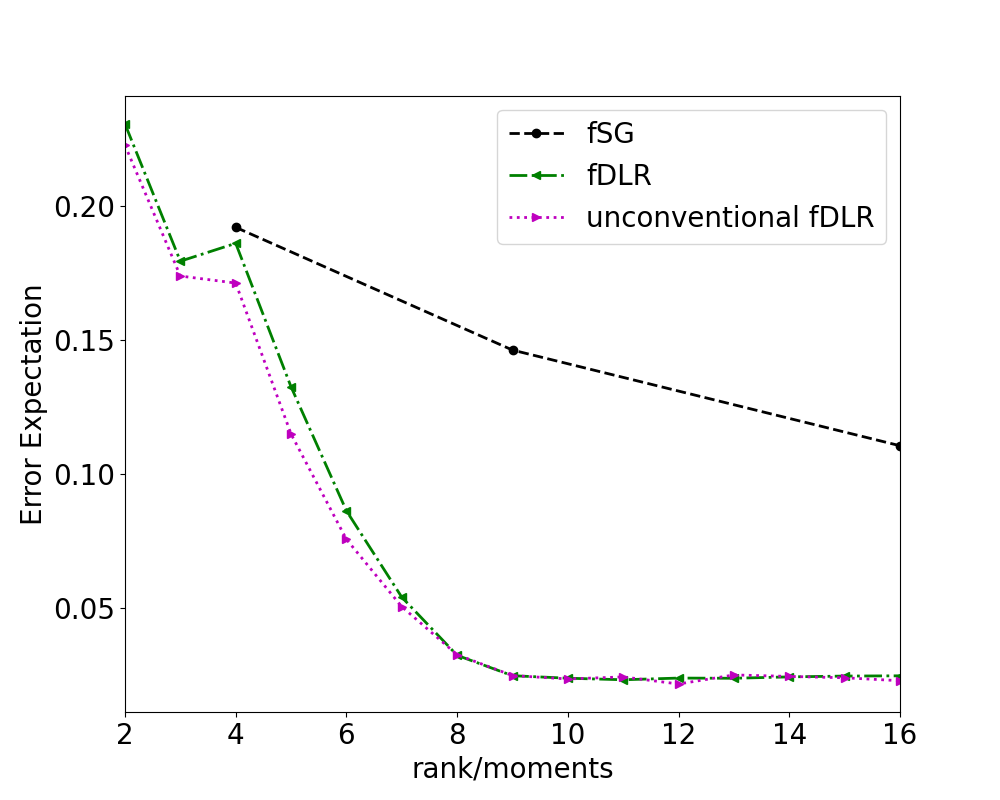}
		\caption{}
		\label{fig:L2ErrorsExpFilter}
	\end{subfigure}%
	\hfill
	\begin{subfigure}{0.5\linewidth}
		\centering
		\includegraphics[width=1.0\linewidth]{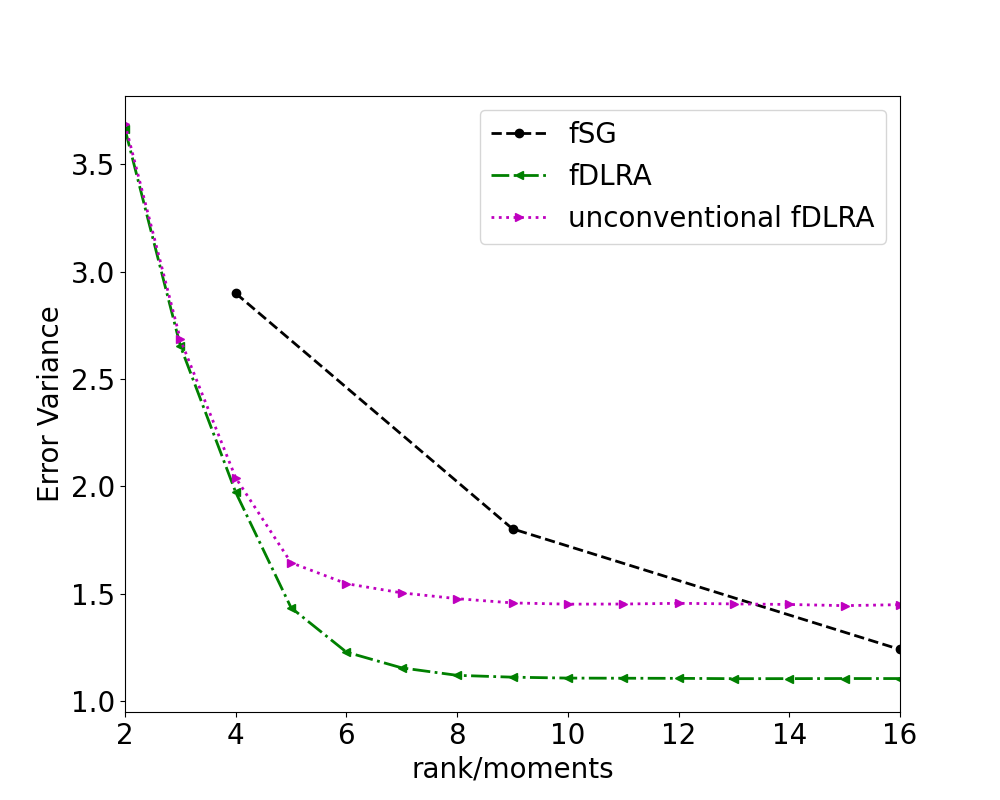}
		\caption{}
		\label{fig:L2ErrorsVarFilter}
	\end{subfigure}
	\hfill
	\caption{L$^2$-error of expectation and variance for DLRA (matrix projector-splitting integrator and unconventional integrator) with varying ranks and SG with varying number of moments. Unfiltered methods are depicted on the top, filtered methods on the bottom.}
	\label{fig:L2Errors}
\end{figure}

\subsection{Summary and Outlook}
In this work, we have derived an efficient representation of the DLRA equations for scalar hyperbolic problems when using the matrix projector-splitting integrator as well as the unconventional integrator. According to the dynamics of the inspected problem, the DLRA method updates the basis functions in time, which allows for an efficient representation of the solution. We have studied modal discretizations and used filters to dampen oscillations. Numerical experiments show a mitigation of the curse of dimensionality through DLRA methods since a reduced number of unknowns is required to represent the uncertainty. A strategy to enforce Dirichlet boundary conditions shows promising results, as boundary conditions are represented exactly while improving the overall solution quality. By applying filters, we can dampen spurious oscillation and thereby ensure a satisfactory result at a lower rank.

In order to further increase the uncertain dimension efficiently, we aim to perform further splitting of the random domain according to \cite{ceruti2020time}. Here, the unconventional integrator will be of high interest, since it allows for parallel solves of all spatial and uncertain basis functions. Further splitting the random domain allows for a significant increase of the number of uncertain dimensions, which we intend to study in future work. Furthermore, we wish to investigate the dampening effects of different DLRA discretizations.


\section*{Acknowledgment} \noindent
The authors would like to thank Christian Lubich and Ryan McClarren for their helpful suggestions and comments. This work was funded by the Deutsche Forschungsgemeinschaft (DFG, German Research Foundation) – Project-ID 258734477 - SFB 1173.

\bibliographystyle{unsrt}  
\bibliography{references}
\end{document}